\renewcommand{\NAT@spacechar}{~}
\newcommand{\bil}{\mathopen}
\newcommand{\bir}{\mathclose}
\newcommand{\Ex}{\mathbf{E}}
\newcommand{\Pb}{\mathbf{P}}
\newcommand{\dd}{\,\mathrm{d}}
\newcommand{\ind}{\mathbb{1}}
\newcommand{\N}{\mathcal{N}}
\newcommand{\NN}{\mathbb{N}}
\newcommand{\RR}{\mathbb{R}}
\newcommand{\UU}{\mathbb{U}}
\newcommand{\C}{\mathscr{C}}
\newcommand{\D}{\mathscr{D}}
\newcommand{\argsup}{\mathop{\rm argsup}\limits}
\newcommand{\myliminf}{\mathop{\underline\lim}\limits}
\newtheorem{lemma}{Lemma}
\newtheorem{theorem}{Theorem}
\newtheorem{definition}{Definition}
\begin{document}

\title{On Smooth Change-Point Location Estimation for Poisson Processes}

\author[1]{A.~Amiri}
\author[2]{S.~Dachian}

\affil[1,2]{\small University of Lille, CNRS, UMR 8524 --- Laboratoire Paul Painlev\'e, F--59000 Lille, France}
\affil[2]{\small Tomsk State University, International Laboratory of Statistics of Stochastic Processes and Quantitative Finance, 634050 Tomsk, Russia}

\date{}

\maketitle
\begin{abstract}
We are interested in estimating the location of what we call ``smooth change-point'' from $n$ independent observations of an inhomogeneous Poisson process. The smooth change-point is a transition of the intensity function of the process from one level to another which happens smoothly, but over such a small interval, that its length $\delta_n$ is considered to be decreasing to $0$ as $n\to+\infty$. We show that if $\delta_n$ goes to zero slower than $1/n$, our model is locally asymptotically normal (with a rather unusual rate $\sqrt{\delta_n/n}$), and the maximum likelihood and Bayesian estimators are consistent, asymptotically normal and asymptotically efficient. If, on the contrary,~$\delta_n$ goes to zero faster than $1/n$, our model is non-regular and behaves like a change-point model. More precisely, in this case we show that the Bayesian estimators are consistent, converge at rate $1/n$, have non-Gaussian limit distributions and are asymptotically efficient. All these results are obtained using the likelihood ratio analysis method of Ibragimov and Khasminskii, which equally yields the convergence of polynomial moments of the considered estimators. However, in order to study the maximum likelihood estimator in the case where $\delta_n$ goes to zero faster than $1/n$, this method cannot be applied using the usual topologies of convergence in functional spaces. So, this study should go through the use of an alternative topology and will be considered in a future work.

\bigskip
\noindent
\textbf{Keywords:} inhomogeneous Poisson process, smooth change-point, maximum likelihood estimator, Bayesian estimators, local asymptotic normality, asymptotic efficiency

\bigskip
\noindent
\textbf{AMS subject classification:} 62M05
\end{abstract}
\section{Introduction}

This paper lies within the realm of statistical inference for inhomogeneous Poisson processes. Recall that $X=\bigl(X(t),\ 0\leq t\leq T\bigr)$ is an inhomogeneous Poisson process (on an interval $[0,T]$) of intensity function $\lambda(t)$, $0\leq t\leq T$, if $X(0)=0$ and the increments of~$X$ on disjoint intervals are independent Poisson random variables:
\[
\Pb\bigl\{ X(t)-X(s)=k \bigr\}
=
\frac{\bigl(\int_s^t \lambda(t) \dd t\bigr)^k}{k!}\,
\exp\biggl\{-\int_s^t \lambda(t) \dd t\biggr\}.
\]
The model of inhomogeneous Poisson process is at the same time simple enough to allow the use of the likelihood ratio analysis, and sufficiently reach to modelize various random phenomena in diverse applied fields, such as biology, communication, seismology, astronomy, reliability theory, and so on (see, for example, \citet{CoLew66,Thompson88,SnyMil12,Streit10,Sarkar16}, as well as \citet{ChaFin18}).

We are interested in the problem of estimation of the location $\theta$, where the (elsewhere smooth) intensity function of an inhomogeneous Poisson process switches from one level (say $\lambda_0$) to another (say $\lambda_0+r$). This transition can happen in several ways. The intensity function can switch from $\lambda_0$ to $\lambda_0+r$ instantaneously (change-point case), as for example in
\[
\lambda_\theta(t) = \lambda_0 + r \, \ind_{\{t \geq \theta\}}.
\]
It can also go from $\lambda_0$ to $\lambda_0+r$ smoothly over a small interval of some fixed length $\delta>0$ (smooth case), as for example in
\[
\lambda_\theta(t) = \lambda_0 + \frac{r}{\delta} \, (t-\theta) \, \ind_{\{\theta \leq t < \theta+\delta\}}(t) + r \, \ind_{\{t \geq \theta+\delta\}}(t).
\]
As an intermediate case, we can mention the case of a cusp type singularity, where the intensity function goes from~$\lambda_0$ to $\lambda_0+r$ continuously over a small interval of some fixed length~$\delta$, but has an infinite derivative at some point of this interval, as for example in
\[
\lambda_\theta(t) = \lambda_0 + \frac{r}{\delta^\varkappa} \, (t-\theta)^\varkappa \, \ind_{\{\theta \leq t < \theta+\delta\}}(t) + r \, \ind_{\{t \geq \theta+\delta\}}(t),
\]
where $\varkappa \in (0,1/2)$ is the order of the cusp. The three above intensity functions are illustrated in Figure~\ref{fig-3lambda}.
\begin{figure}[!ht]
\centering
\includegraphics[scale=0.35]{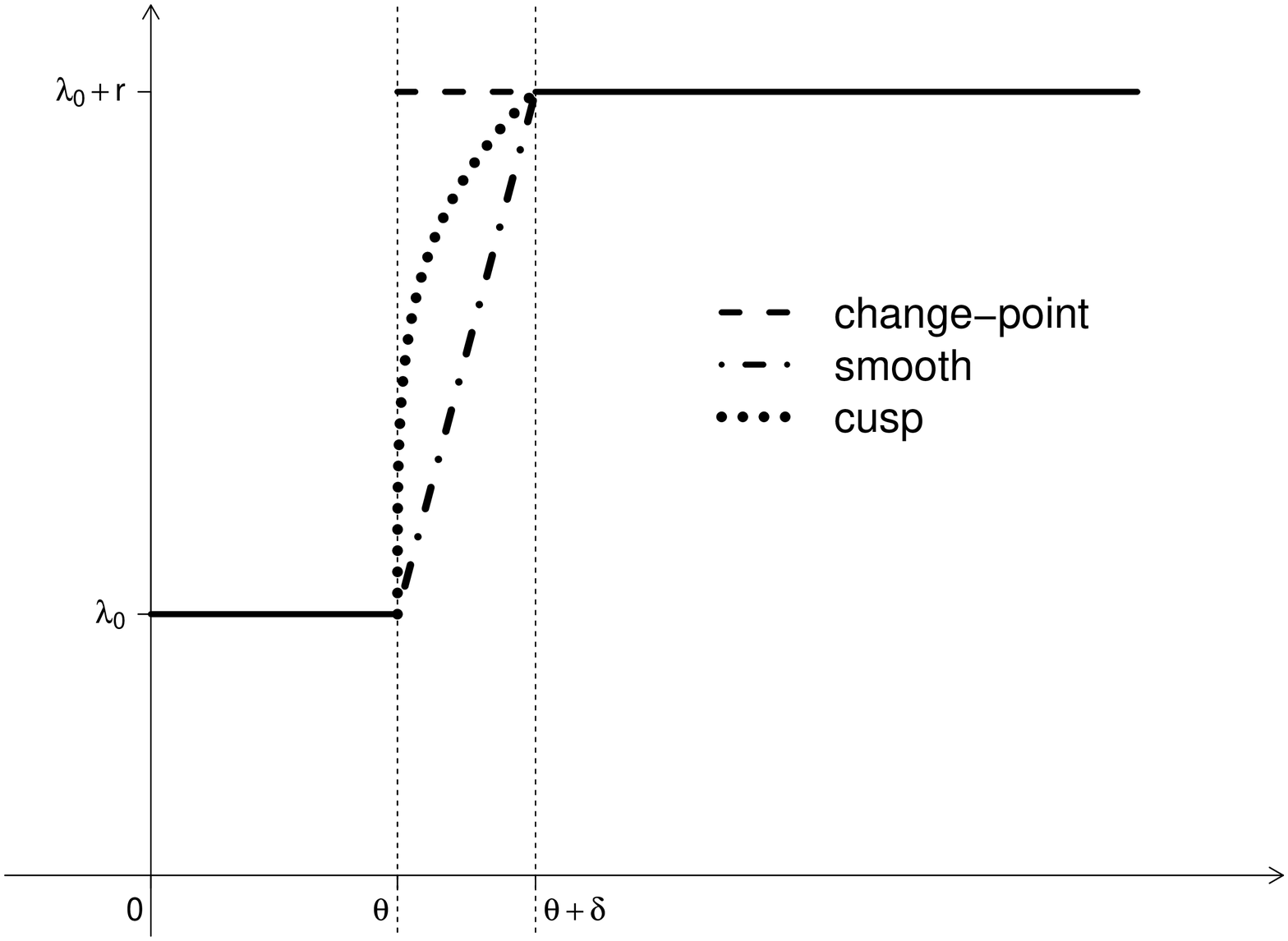}
\caption{change-point, smooth and cusp (with $\varkappa=1/4$) cases}
\label{fig-3lambda}
\end{figure}

In all these cases, the estimation problem is considered in some asymptotic setting, such as $n\to+\infty$ independent observations on a fixed interval, large observation interval asymptotics (an observation on the interval $[0,n\tau]$ with a $\tau$-periodic intensity function), large intensity asymptotics (an observation on a fixed interval with intensity function multiplied by~$n$), and so on.

In the smooth case, the statistical model is regular. The regular statistical models for Poissonian observations were studied by \citeauthor{Kut79} in~\citeyearpar{Kut79,Kut84,Kut98}. It was shown that such models are locally asymptotically normal, and that the maximum likelihood and Bayesian estimators (for any continuous strictly positive prior density~$q$) are consistent, asymptotically normal (with classic rate~$1/\sqrt{n}$) and asymptotically efficient.

In the change-point case, studied by \citeauthor{Kut84} in~\citeyearpar{Kut84,Kut98}, the properties of the estimators are essentially different. The maximum likelihood and Bayesian estimators are consistent, converge at a faster rate $1/n$, their limit distributions are given by some (different) functionals of a two-sided Poisson process, and only the Bayesian estimators are asymptotically efficient.

Finally, the cusp case was studied by \citeauthor{D03S} in~\citeyearpar{D03S}. In this case, the maximum likelihood and Bayesian estimators are consistent, converge at rate $1/n^{(2\varkappa+1)}$ (which is faster than $1/\sqrt{n}$ and slower than $1/n$), their limit distributions are given by some (different) functionals of a two-sided fractional Brownian motion, and only the Bayesian estimators are asymptotically efficient.

Let us note here that all the above cited studies were carried out using the likelihood ratio analysis method introduced by \citeauthor{IbHas81} in~\citeyearpar{IbHas81}, which equally yields the convergence of polynomial moments of the considered estimators.

Note also that recently the problem of source localization on the plane by observations of Poissonian signals from several detectors was considered in all the three cases (smooth, change-point and cusp) in \citet{ChKut20}, \citet*{FKT20} and \citet*{CDK20}, respectively.

In this paper, we consider the situation, which we call \emph{smooth change-point}, where the intensity function goes from $\lambda_0$ to~$\lambda_0+r$ smoothly, but over such a small interval, that its length is considered to be decreasing to $0$ as $n\to+\infty$. Such an intensity function can, for example, be given by
\[
\lambda_{\theta}^{(n)}(t) = \lambda_0 + \frac{r}{\delta_n} \, (t-\theta) \, \ind_{\{\theta \leq t < \theta+\delta_n\}}(t) + r \, \ind_{\{t \geq \theta+\delta_n\}}(t),
\]
where $\delta_n \searrow 0$. Note that the intensity function now depends on~$n$, and so, we are in a scheme of series (triangular array) framework.

The main result of the paper is that there is a ``phase transition'' in the asymptotic behavior of the estimators depending on the rate at which $\delta_n$ goes to $0$. More precisely, we show that if $\delta_n$ goes to zero slower than the ``critical'' rate $1/n$ (that is, if $n\delta_n \to +\infty$), the behavior resembles that of the smooth case, and if $\delta_n$ goes to zero faster than $1/n$ (that is, if $n\delta_n \to 0$), the behavior is exactly the same as in the change-point case. We call these two situations \emph{slow case\/} and \emph{fast case}, respectively.

More specifically, in the slow case we show that our model is locally asymptotically normal, and that the maximum likelihood and Bayesian estimators are consistent, asymptotically normal and asymptotically efficient. It should be noted here that all these asymptotic results use a rather unusual rate $\sqrt{\delta_n/n}$, which is faster than the rate $1/\sqrt{n}$ of the smooth case and slower than the rate $1/n$ of the change-point case. As to the fast case, we show that the asymptotic behavior of the Bayesian estimators is exactly the same as in the change-point model: they are consistent, converge at rate~$1/n$, their limit distribution is given by a functional of a two-sided Poisson process, and they are asymptotically efficient.

In our opinion, these results justify the (successful) use of change-point models for real applications, despite the fact that physical systems can not switch immediately (discontinuously) from one level to another. Indeed, if the transition happens quickly enough, it seems more appropriate to use the fast case of our smooth change-point model, and yet it yields the same asymptotic behavior (at least for the Bayesian estimators, although we conjecture that it is also true for the maximum likelihood estimator). From a more practical point of view, one can say that for a given (large) number~$n$ of observations and a given signal shape (switching from one level to another on an interval of a given length~$\delta$), the (Gaussian) approximation of the estimation errors provided by the regular model is suitable when $n\delta$ is large, and the approximation provided by the change-point model when~$n\delta$ is small.

Let us note that all our results were obtained using the likelihood ratio analysis method of \citeauthor{IbHas81}, which equally yields the convergence of polynomial moments of the considered estimators. On the other hand, for the study of the maximum likelihood estimator, this method needs the convergence of the normalized likelihood ratio in some functional space, and up to the best of our knowledge, until now it was only applied using either the space $\C_0(\RR)$ of continuous functions on~$\RR$ vanishing at~$\pm\infty$ equipped with the topology induced by the usual $\sup$ norm, or the Skorokhod space $\D_0(\RR)$ of c\`adl\`ag functions vanishing at~$\pm\infty$ equipped with the usual Skorokhod topology. However, we will see that in the fast case this convergence can not take place in neither of these topologies, as both of them do not allow the convergence of continuous functions to a discontinuous limit. So, the study of the maximum likelihood estimator in the fast case should go through the use of an alternative topology and will be considered in a future work.

Another possible perspective is to study the behavior of the estimators in the critical case~$\delta_n = c/n$. Also, for models where the transition of the intensity function from $\lambda_0$ to~$\lambda_0+r$ is continuous over an interval of length~$\delta$ but has a cusp of order $\varkappa$ at some point of this interval, it can be interesting to study the situations (somewhat similar to our model) where $\delta = \delta_n \searrow 0$ and/or $\varkappa = \varkappa_n \searrow 0$.

Finally, it can be interesting to consider smooth change-point situations for other models of observations, where we believe it should be possible to obtain similar results. It is worth mentioning that a similar result for the signal in white Gaussian noise~(WGN) model would be consistent with the heuristic considerations of Section~5.3.6 of \citet{TriShi86}, where the authors conclude that for a given smooth (but close to discontinuous) signal, the regular model is more ``adequate'' when the signal-to-noise ratio~(SNR) is large, and the change-point one when the SNR is moderate. Indeed, considering a given signal corresponds to fixing the length of the transition interval $\delta$, and as bigger values of the SNR in the signal in WGN model correspond to bigger values of $n$ in our model, the large SNR case is consistent with the case when $n\delta$ is large, and the moderate SNR case with the case when $n\delta$ is small (but $n$ large).

\section{Statement of the problem}

We consider the model of observation of $n\in\NN^*$ independent realizations of an inhomogeneous Poisson process. Let $0<\alpha < \beta <\tau$ be some known constants, and $\psi$ be some known strictly positive continuous function on $[0,\tau]$. Let also $r>-\min_{0\leq t\leq \tau} \psi (t)$ be some known constant, $({\delta_n})_{n\in\NN}$ be some known sequence decreasing to $0$, and $\theta \in \Theta =(\alpha,\beta)$ be a one-dimensional unknown parameter that we want to estimate in the asymptotics~$n\to+\infty$. We observe $X^{(n)}=(X_1,\ldots, X_n)$, where $X_j=\bigl(X_j(t),\ 0 \leq t \leq \tau \bigr)$, $j = 1,\ldots , n$, are independent Poisson processes on the interval $[0,\tau]$ with intensity function $\lambda_{\theta}=\lambda_{\theta}^{(n)}$, $\theta \in \Theta$, given by
\begin{equation}
\label{lambda}
\lambda_{\theta}^{(n)}(t)
=
\psi(t) + \frac{r}{\delta_n} \, (t-\theta) \, \ind_{\bil[\theta, \theta+\delta_n\bir[}(t) + r \, \ind_{\bil[\theta+\delta_n, \tau\bir]}(t),\qquad 0\leq t\leq \tau.
\end{equation}
This function, in an important particular case $\psi \equiv\lambda_0>0$, is
presented in Figure~\ref{fig-lambda}.
\begin{figure}[!ht]
\centering
\includegraphics[scale=0.35]{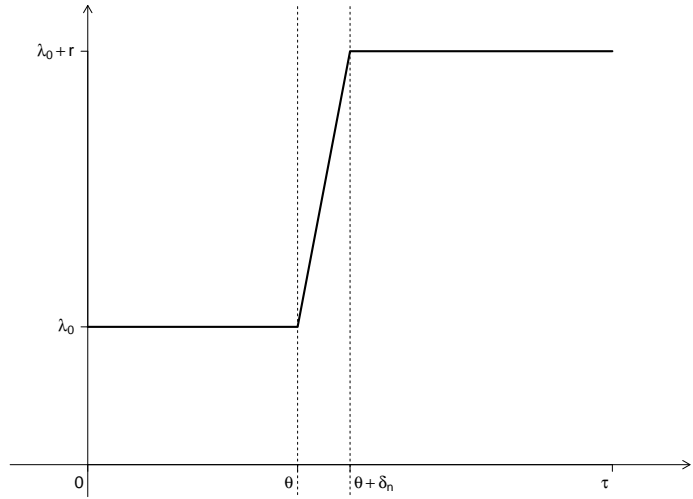}
\caption{intensity function $\lambda_\theta^{(n)}$ with $\psi\equiv\lambda_0$}
\label{fig-lambda}
\end{figure}

Note that our model of observation is equivalent to observing a single realization on~$[0, n\tau]$ of an inhomogeneous Poisson process $X=\bigl(X(t),\ 0\leq t\leq n\tau\bigr)$ with a $\tau$-periodic intensity function equal to $\lambda_\theta^{(n)}$ on the first period (large observation interval asymptotics). Also, it is equivalent to observing a single realization on~$[0, \tau]$ of an inhomogeneous Poisson process $Y^{(n)}=\bigl(Y^{(n)}(t),\ 0\leq t\leq \tau \bigr)$ of intensity function $\Lambda_{\theta}^{(n)} = n \, \lambda_{\theta}^{(n)}$ (large intensity asymptotics).

Recall that regular models of Poissonian observations were treated previously and shown to be locally asymptotically normal~(LAN) by \citeauthor{Kut79} in~\citeyearpar{Kut79,Kut84,Kut98} (see also \citet*{DKY16R}, where the corresponding hypothesis testing problem was considered). An example of such a regular model is the model with an intensity function~$\lambda_\theta$ given as $\lambda_\theta^{(n)}$ in~\eqref{lambda}, but with~$\delta_n$ replaced by a strictly positive constant $\delta$:
\begin{equation}
\label{lambda-reg}
\lambda_{\theta}(t)=
\psi(t)+ \frac{r}{\delta} \, (t-\theta) \, \ind_{\bil[\theta, \theta+\delta\bir[}(t) + r \, \ind_{\bil[\theta+\delta, \tau\bir]}(t),\qquad 0\leq t\leq \tau.
\end{equation}
In this case, the intensity function is continuous and do not depend on $n$.

Recall also that various singular models of Poissonnian observations were already treat\-ed previously. The change-point case was studied by \citeauthor{Kut84} in~\citeyearpar{Kut84,Kut98}, the cusp case was considered by \citeauthor{D03S} in~\citeyearpar{D03S}, and the cases of $0$-type and $\infty$-type singularities were investigated by \citeauthor{D11S} in~\citeyearpar{D11S} (see also \citet*{DKY16S}, where the hypothesis testing problem was considered for different singular cases). An example of a change-point model is the model with an intensity function $\lambda_\theta$ given as $\lambda_\theta^{(n)}$ in~\eqref{lambda}, but with~$\delta_n$ replaced by~$0$:
\begin{equation}
\label{lambda-sing}
\lambda_\theta(t) = \psi(t)+r\, \ind_{\{t\geq \theta\}} ,\qquad 0\leq t\leq \tau.
\end{equation}
In this case, the intensity function is discontinuous and do not depend on $n$.

For our model, the intensity function $\lambda_\theta^{(n)}$ is continuous for all $n \in \NN^*$, but its limit as~$n\to +\infty$ is discontinuous. It is, in some sense, the inverse of the case treated in \citet{DY15}, where the intensity function was supposed to have a discontinuity that disappears as~$n\to +\infty$.

We denote $\Pb_\theta = \Pb_\theta^{(n)}$ the probability measure corresponding to $X^{(n)}$. We also denote $\Ex_\theta = \Ex_\theta^{(n)}$ the corresponding mathematical expectation. The likelihood, with respect to the measure $\Pb_{*} = \Pb_{*}^{(n)}$ corresponding to $n$ independent homogeneous Poisson processes of unit intensity, is given (see, for example, \citet{LiShi01}) by
\begin{align*}
L \bigr(\theta, X^{(n)}\bigl)
&=
\frac{\dd\Pb_{\theta}\bigl(X^{(n)}\bigr)}{\dd\Pb_{*}} \\*
&=
\exp\Biggl\{ \sum_{j=1}^{n} \int_{0}^{\tau} \ln\bigl(\lambda_{\theta}(t)\bigr)\dd X_{j}(t)-n\int_{0}^{\tau} \bigl(\lambda_{\theta}(t)-1 \bigr) \dd t\Biggr\}, \qquad \theta \in \Theta .
\end{align*}

As estimators of the unknown parameter $\theta$, we consider the maximum likelihood estimator~(MLE) and the Bayesian estimators~(BEs). The MLE~$\hat{\theta}_n$ is given by
\[
\hat{\theta}_n
=
\argsup_{\theta \in \Theta} L \bigl(\theta, X^{(n)} \bigr),
\]
and the BE $\tilde{\theta}_n$ for quadratic loss and prior density $q$ is given by
\[
\tilde{\theta}_n
=
\frac{\int_{\alpha}^{\beta}\theta \, q(\theta) \, L \bigl(\theta, X^{(n)} \bigr) \dd \theta}{\int_{\alpha}^{\beta} q(\theta) \, L \bigl(\theta, X^{(n)} \bigr) \dd \theta} \, .
\]

Both in the regular and singular cases cited above, the study of the asymptotic behavior of the MLE and of the BEs was carried out using the likelihood ratio analysis method introduced by \citeauthor{IbHas81} in~\citeyearpar{IbHas81}. This method consist in first studying the \emph{normalized likelihood ratio\/} given by
\begin{align*}
Z_n(u) &= Z_n^{(\theta)}(u) =
\frac{\dd\Pb_{\theta+u\varphi_n}\bigl(X^{(n)}\bigr)}{\dd\Pb_{\theta}}
=
\frac{L \bigl(\theta+u\varphi_n, X^{(n)}\bigr)}{L\bigl(\theta, X^{(n)}\bigr)}\\*
&=
\exp\Biggl\{ \sum_{j=1}^{n} \int_{0}^{\tau} \ln\biggl(\frac{\lambda_{\theta+u\varphi_n}(t)}{\lambda_{\theta}(t)} \biggr)\dd X_{j}(t) - n\int_{0}^{\tau} \bigl(\lambda_{\theta+u\varphi_n}(t) - \lambda_{\theta}(t) \bigr) \dd t\Biggr\},\qquad u\in\UU_n,
\end{align*}
where $\UU_n=\bil]\varphi_n^{-1}(\alpha-\theta),\varphi_n^{-1}(\beta-\theta)\bir[$ and $\varphi_n$ is some sequence decreasing to $0$, called \emph{likelihood normalization rate}. This rate must be chosen so that the process $Z_n$ converges (in some sense) to a non-degenerate (not identically equal to $1$) limit process defined on the whole real line (note that $\UU_n\uparrow\RR$), called \emph{limit likelihood ratio}. Then, the properties of the MLE and of the BEs are deduced.

In the regular case (see \citet{Kut79,Kut84,Kut98}), the likelihood normalization rate can be chosen as
\[
\varphi_n = \frac{1}{\sqrt{n}} \, .
\]
Note that in this case the processes $Z_n$, $n \in \NN$, can be extended to the whole real line so that their trajectories almost surely belong to the space $\C_0(\RR)$ of continuous functions on~$\RR$ vanishing at~$\pm\infty$. The process $Z_n$ converge, in $\C_0(\RR)$ equipped with the usual $\sup$ norm, to the process $Z^\circ_{I(\theta)}$, where
\[
I(\theta) = \int_{0}^{\tau} \frac{\bigl(\dot\lambda_\theta (t)\bigr)^2}{\lambda_\theta (t)} \, \dd t
\]
(here $\dot\lambda_\theta(t)$ denotes the derivative of $\lambda_\theta(t)$ w.r.t.~$\theta$) is the Fisher information, and for any $F \in \RR$, the process $Z^\circ_F$ is defined by
\begin{equation}
\label{LAN_LLR}
Z^\circ_F (u) = \exp\biggl\{ u\,\xi_F - \frac{u^2}{2}\, F \biggr\},\qquad u\in\RR.
\end{equation}
Here and in the sequel $\xi_F\sim \N(0,F)$. In fact, the model is LAN (with classic rate $1/\sqrt{n}$) in this case. Of course, we could have also chosen the rate $\varphi_n=1/\sqrt{nI(\theta)}$, in which case the limit likelihood ratio process would be
\[
Z_1^\circ (u) = \exp\biggl\{ u\,\xi_1 - \frac{u^2}{2} \biggr\},\qquad u\in\RR.
\]

Then, the MLE and the BEs (for any continuous strictly positive prior density $q$) are consistent, are asymptotically normal with rate~$1/\sqrt{n}$:
\[
\sqrt{n} \bigl(\hat{\theta}_n-\theta\bigr) \Longrightarrow \xi_{\frac{1}{I(\theta)}} \quad \text{and} \quad \sqrt{n} \bigl(\tilde{\theta}_n-\theta\bigr) \Longrightarrow \xi_{\frac{1}{I(\theta)}},
\]
we have the convergence of polynomial moments, and both the estimators are asymptotically efficient.  Here and in the sequel, the symbol ``$\Longrightarrow$'' denotes the convergence in distribution (under~$\theta$).

Note also that in the case of the intensity function given by~\eqref{lambda-reg}, using the change of variable
\[
x=\frac{r(t-\theta)}{\delta} \, ,
\]
we obtain that the Fisher information is
\[
I(\theta)
= \int_{\theta}^{\theta +\delta} \frac{\bigl( -\frac{r}{\delta} \bigr)^2}{\psi(t)+\frac{r}{\delta}(t-\theta)} \, \dd t
= \frac{r}{\delta}\int_{0}^{r} \frac{1}{\psi\bigl(\theta+\frac{\delta}{r}\,x\bigr)+x} \, \dd x,
\]
which, in the particular case $\psi\equiv\lambda_0$, amounts to
\[
I(\theta) = \frac{r}{\delta}\, \ln\biggl(\frac{\lambda_0+r}{\lambda_0}\biggr).
\]

As for the change-point case (see \citet{Kut84,Kut98}), the likelihood normalization rate can be chosen as
\[
\varphi_n = \frac{1}{n} \, .
\]
Note that in this case the processes $Z_n$, $n \in \NN$, can be extended to the whole real line so that their trajectories almost surely belong to the Skorokhod space $\D_0(\RR)$ of c\`adl\`ag functions vanishing at~$\pm\infty$. The process $Z_n$ converge to the process $Z^\star_{a,b}$ defined by
\begin{equation}
\label{CP_LLR}
Z^\star_{a,b}(u) =
\begin{cases}
\vphantom{\bigg(} \exp \Bigl\{ \ln \bigl( \frac a b\bigr) Y^+(u) +(b-a)u\Bigr\} , & \text{if }u\in\RR_+, \\
\vphantom{\bigg(} \exp \Bigl\{ \ln \bigl( \frac b a \bigr) Y^-(-u) + (b-a)u\Bigr\} , & \text{if }u\in\RR_-, \\
\end{cases}
\end{equation}
where $a,b>0$ are some constants, and $Y^+$ and $Y^-$ are independent Poisson processes on~$\RR_+$ of constant intensities $b$ and~$a$ respectively. Here, the convergence takes place in~$\D_0(\RR)$ equipped with the usual Skorokhod topology induced by the distance
\[
d(f,g)=\inf_\nu \biggl[ \sup_{u\in\RR} \bigl|f(u)-g\bigl(\nu(u)\bigr)\bigl| + \sup_{u\in\RR} \bigl|u-\nu(u)\bigr| \biggr],
\]
where the $\inf$ is taken over all continuous one-to-one mappings $\nu:\RR \longrightarrow \RR$.

Then, the MLE and the BEs (for any continuous strictly positive prior density $q$) are consistent, converge at rate~$1/n$:
\[
n \bigl(\hat{\theta}_n-\theta\bigr) \Longrightarrow \eta_{a,b} \quad \text{and} \quad n \bigl(\tilde{\theta}_n-\theta\bigr) \Longrightarrow \zeta_{a,b},
\]
where
\[
\eta_{a,b}=\argsup_{u\in \RR} Z^\star_{a,b}(u)
\]
and
\begin{equation}
\label{zeta_ab}
\zeta_{a,b}=\frac{\int_{u\in \RR} u Z^\star_{a,b}(u) \dd u}{\int_{u \in \RR} Z^\star_{a,b} (u) \dd u} \, ,
\end{equation}
we have the convergence of polynomial moments, and the BEs are asymptotically efficient.

Note also that in the case of the intensity function given by~\eqref{lambda-sing}, we have $a=\psi(\theta)$ and~$b=\psi(\theta)+r$, which, in the particular case $\psi\equiv\lambda_0$, amounts to $a=\lambda_0$ and $b=\lambda_0+r$.

Let us also mention here that the random variables $\eta_{a,b}$ and $\zeta_{a,b}$ can be rewritten as
\[
\eta_{a,b}=\frac{\eta_\rho}{a-b}\quad\text{and}\quad
\zeta_{a,b}=\frac{\zeta_\rho}{a-b}\,,
\]
where $\rho=\bigl|\ln\bigl(\frac ab\bigr)\bigr|$ and the random variables $\eta_\rho$ and $\zeta_\rho$ are defined in the same way as $\eta_{a,b}$ and $\zeta_{a,b}$, but using the random process $Z^\star_\rho(u)$, $u\in\RR$, given by
\[
Z^\star_\rho(x)=\begin{cases}
\vphantom{\Big)}\exp\bigl\{\rho\,\Pi^+(x)-x\bigr\}, &\text{if } x\geq 0,\\
\vphantom{\Big)}\exp\bigl\{-\rho\,\Pi^-(-x)-x\bigr\}, &\text{if } x\leq 0,\\
\end{cases}
\]
with $\Pi^+$ and $\Pi^-$ independent Poisson processes on $\RR_+$ of constant intensities $1/(e^\rho-1)$ and~$1/(1-e^{-\rho})$ respectively. The approximate values of the second moments of the random variables $\eta_\rho$ and $\zeta_\rho$ (giving the limiting mean squared errors of the estimators) where obtained with the help of numerical simulations by \citeauthor{D10S} in~\citeyearpar{D10S}. Moreover, it was shown that $\Ex \eta_\rho^2 \sim 2$ and $\Ex \zeta_\rho^2 \sim 1$ for large values of $\rho$, as well as that $\Ex \eta_\rho^2 \sim 26/\rho^2$ and~$\Ex \zeta_\rho^2 \sim 16\,\zeta(3)/\rho^2$ (where $\zeta$ is Riemann's zeta function) for small values of $\rho$.

Finally, let us note that for our model, the trajectories of the (extended to the whole real line) processes $Z_n$, $n \in \NN$, almost surely belong to the space $\C_0(\RR)$. We will see later in this paper that in the case $ n \delta_n\to +\infty$, the trajectories of the limit process also belong to $\C_0(\RR)$, and the convergence takes place in this space equipped with the usual $\sup$ norm. However, this is not the case when $n\delta_n\to 0$. In this case, the trajectories of the limit process must be discontinuous $\bigl($belong to $\D_0(\RR)\setminus\C_0(\RR)\bigr)$, and hence the convergence can not take place neither in the topology induced by the $\sup$ norm, nor in the usual Skorokhod topology.

\section{Main results}

It turns out that the asymptotic behavior of our model depends on the rate of convergence of~$\delta_n$ to zero. More precisely, there are three different cases:
\begin{align}
&n\delta_n\xrightarrow[n\to +\infty]{} +\infty , \label{casl} \\*
&n\delta_n\xrightarrow[n\to +\infty]{} 0 \label{casr} \\*
\intertext{and}
&n\delta_n\xrightarrow[n\to +\infty]{} c>0. \label{casc}
\end{align}
In this paper, we limit ourselves to the study of the MLE and of the BEs in the case~\eqref{casl}, which we call \emph{slow case}, and to the study of the BEs in the case~\eqref{casr}, which we call \emph{fast case}.

The study of the MLE in the fast case is more complicated, due to the already mentioned fact that in this case the convergence of the normalized likelihood ratio can not take place neither in the topology induced by the $\sup$ norm, nor in the usual Skorokhod topology. So, the MLE in the fast case, as well as the case~\eqref{casc}, will be considered in future works.

\subsubsection*{Slow case}

In order to study the behavior of the MLE and of the BEs of~$\theta$ in the slow case, we choose the likelihood normalization rate
\[
\varphi_n=\sqrt{\frac{\delta_n}{n}} \, ,
\]
and we denote
\[
F = F(\theta) = r\ln \Bigl( \frac{\psi(\theta)+r}{\psi(\theta)} \Bigr).
\]

We also recall the random process $Z^\circ_F$ defined by~\eqref{LAN_LLR} and note that
\[
\argsup_{u\in \RR} Z^\circ_F(u) = \frac{\int_{u\in \RR} u Z^\circ_{F}(u) \dd u}{\int_{u \in \RR} Z^\circ_{F}(u) \dd u} = \frac{\xi_F}{F} \sim \N\Bigl(0, \frac1F\Bigr).
\]

Note that the rate $\varphi_n$ goes to zero faster than $1/\sqrt{n}$ (the classic rate of the regular case) and, as it follows from~\eqref{casl}, slower than $1/n$ (the rate of the change-point case).

Now we can state the following theorem giving a H\'ajek-Le Cam lower bound on the mean squared errors of all the estimators.
\begin{theorem}
\label{Borne.CL}
Suppose\/ $n\delta_n\to+\infty$. Then, for all\/ $\theta_0 \in \Theta$, we have
\[
\lim_{\epsilon \to 0}\ \myliminf_{n\to +\infty}\ \inf_{\bar{\theta}_n}\ \sup_{\bil| \theta-\theta_0 \bir|<\epsilon} \varphi_n^{-2} \ \Ex_{\theta} \bigl(\bar{\theta}_n- \theta\bigr)^2 \geq \frac{1}{F(\theta_0)} \, ,
\]
where the\/ $\inf$ is taken over all possible estimators\/ $\bar{\theta}_n$ of the parameter\/ $\theta$.
\end{theorem}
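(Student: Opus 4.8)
The plan is to derive this H\'ajek--Le Cam type lower bound from a local asymptotic normality (LAN) property of the family $\bigl(\Pb_{\theta}^{(n)}\bigr)$ at each point $\theta_0\in\Theta$, with normalization rate $\varphi_n=\sqrt{\delta_n/n}$ and limiting Fisher information $F(\theta_0)$. The standard route is: (i) establish LAN; (ii) invoke the general H\'ajek--Le Cam minimax bound for LAN families (as in Theorem~II.12.1 of \citet{IbHas81}), which gives exactly $\liminf_n \inf_{\bar\theta_n}\sup_{|\theta-\theta_0|<\epsilon}\varphi_n^{-2}\Ex_\theta(\bar\theta_n-\theta)^2\geq 1/F(\theta_0)$ for each fixed $\epsilon$, and then let $\epsilon\to 0$.

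So the heart of the matter is the LAN expansion. Writing $u_n=\varphi_n^{-1}(\theta-\theta_0)$, I would study $\ln Z_n^{(\theta_0)}(u)$ for fixed $u\in\RR$, i.e.
\[
\ln Z_n^{(\theta_0)}(u)=\sum_{j=1}^{n}\int_0^\tau \ln\!\Bigl(\frac{\lambda_{\theta_0+u\varphi_n}(t)}{\lambda_{\theta_0}(t)}\Bigr)\dd X_j(t)-n\int_0^\tau\!\bigl(\lambda_{\theta_0+u\varphi_n}(t)-\lambda_{\theta_0}(t)\bigr)\dd t.
\]
The two intensity functions differ only on an interval of length $O(\delta_n+u\varphi_n)=O(\delta_n)$ near $\theta_0$, where both are of the form $\psi(t)+(r/\delta_n)(t-\cdot)_+\wedge r$. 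I would split the compensator term: using a first-order Taylor expansion in $\delta_n$ (valid since $\psi$ is continuous, hence essentially constant $=\psi(\theta_0)$ on that shrinking interval) one checks $n\int_0^\tau(\lambda_{\theta_0+u\varphi_n}-\lambda_{\theta_0})=o(1)$ after the leading terms cancel — this is where the rate $\varphi_n=\sqrt{\delta_n/n}$ and the specific triangular/ramp shape enter. For the stochastic term, decompose $\dd X_j=\dd(X_j-n\Lambda)+\,n\lambda_{\theta_0}\dd t$ into a centered part plus compensator; the compensator part combines with the second term above, and the martingale part, call it $\Delta_n(u)$, should satisfy $\Delta_n(u)\Rightarrow u\,\xi_{F(\theta_0)}$ with $\xi_{F}\sim\N(0,F)$, while $\ln Z_n^{(\theta_0)}(u)=\Delta_n(u)-\tfrac12 u^2 F(\theta_0)+o_{\Pb_{\theta_0}}(1)$.

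To get the Gaussian limit for $\Delta_n(u)$ I would use a CLT for sums of independent random variables (Lindeberg), computing the variance: after the change of variable $x=r(t-\theta_0)/\delta_n$, $\mathrm{Var}\bigl(\Delta_n(u)\bigr)\to u^2\int_0^r\frac{1}{\psi(\theta_0)+x}\dd x=u^2\,r\ln\frac{\psi(\theta_0)+r}{\psi(\theta_0)}=u^2 F(\theta_0)$, reproducing the announced $F$. The Lindeberg condition holds because each summand is uniformly $O(\varphi_n\cdot(1/\delta_n)\cdot\delta_n)=O(\varphi_n)\to 0$ after the right scaling, so no single jump dominates. The quadratic-variation / small-jumps control here is the main technical obstacle: one must carefully handle the logarithm $\ln(1+(\lambda_{\theta_0+u\varphi_n}-\lambda_{\theta_0})/\lambda_{\theta_0})$, whose argument is $O(\varphi_n/\delta_n)=O(1/\sqrt{n\delta_n})\to 0$ uniformly, expand it to second order, and show the remainder is negligible — this requires $n\delta_n\to+\infty$ in an essential way (when $n\delta_n\to 0$ the increment is $O(1)$ and this expansion fails, which is precisely the fast/change-point regime).

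Once LAN is in hand at every $\theta_0\in\Theta$ with the additional mild requirement that the map $\theta_0\mapsto F(\theta_0)$ is continuous (clear from the formula, since $\psi$ is continuous and strictly positive), the minimax lower bound of \citet{IbHas81} applies verbatim and yields the stated inequality after passing to the limit $\epsilon\to0$ and using continuity of $F$ to replace $F(\theta)$ by $F(\theta_0)$ in the bound. I would present the proof in two clearly separated steps — first the LAN lemma (which will be reused for the upper bounds on the MLE and BEs), then the citation-based deduction of the minimax bound — since the LAN expansion is the reusable analytic core and the rest is a black-box application of Ibragimov--Khasminskii.
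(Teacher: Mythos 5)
Your proposal follows essentially the same route as the paper: establish LAN at the rate $\varphi_n=\sqrt{\delta_n/n}$ with limiting Fisher information $F(\theta_0)=r\ln\bigl((\psi(\theta_0)+r)/\psi(\theta_0)\bigr)$ (via the same change of variable $x=r(t-\theta_0)/\delta_n$ and the same essential use of $n\delta_n\to+\infty$ to kill the remainders), and then deduce the minimax bound from the standard H\'ajek--Le Cam theorem for LAN families. The only cosmetic difference is that the paper obtains LAN by verifying the $L^2$ conditions of Theorem~2.1 of \citet{Kut98} rather than running the Lindeberg CLT by hand, which amounts to the same computations.
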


This theorem is a direct consequence of the fact that our model is LAN (though with a rather unusual rate $\varphi_n= \sqrt{\delta_n/n}$). More precisely, we have the following lemma, which will be proved in Section~\ref{proofs} by checking the conditions of Theorem~2.1 of \citet{Kut98} and applying it.
\begin{lemma}
Suppose\/ $n\delta_n\to+\infty$. Then, the normalized (using the rate\/ $\varphi_n= \sqrt{\delta_n/n}$) likelihood ratio\/ $Z_n(u) = \frac{\dd\Pb_{\theta+u\varphi_n}(X^{(n)})}{\dd\Pb_{\theta}}\,$, $u\in\UU_n$, admits the representation
\[
Z_n(u) = \exp\biggl\{ u\Delta_n - \frac{u^2}{2} \, F + \varepsilon_n(\theta,u)\biggr\},
\]
where\/ $\Delta_n \Longrightarrow \xi_F$ is given by
\[
\Delta_n(\theta) =
- \frac{r}{\sqrt{n \delta_n}} \; \sum_{j=1}^{n} \int_{\theta}^{\theta+\delta_n} \frac{1}{\psi(\theta)+\frac{r}{\delta_n} \, (t-\theta)} \, \dd X_j(t)
+
r\sqrt{n\delta_n},
\]
and\/ $\varepsilon_n(\theta,u)$ converges to zero in probability.
\label{LAN}
\end{lemma}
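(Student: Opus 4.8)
The plan is to establish the LAN representation directly, without invoking the abstract machinery of Kutoyants's Theorem~2.1 as a black box, but rather by computing the log-likelihood ratio explicitly and identifying the linear and quadratic terms. First I would write out
\[
\ln Z_n(u) = \sum_{j=1}^n \int_0^\tau \ln\biggl(\frac{\lambda_{\theta+u\varphi_n}(t)}{\lambda_\theta(t)}\biggr) \dd X_j(t) - n\int_0^\tau \bigl(\lambda_{\theta+u\varphi_n}(t)-\lambda_\theta(t)\bigr)\dd t,
\]
and observe that since $\lambda_{\theta+u\varphi_n}$ and $\lambda_\theta$ differ only on the union of the two transition intervals $[\theta,\theta+\delta_n)$ and $[\theta+u\varphi_n,\theta+u\varphi_n+\delta_n)$ (plus possibly a small interval where one equals $\psi+r$ and the other is still ramping), the relevant region of integration has total length of order $\delta_n + \varphi_n = \delta_n + \sqrt{\delta_n/n}$, which is $O(\delta_n)$ since $n\delta_n\to\infty$. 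On this region $\psi$ is essentially constant equal to $\psi(\theta)$ by continuity, so I would replace $\psi(t)$ by $\psi(\theta)$ everywhere up to a negligible error controlled by the modulus of continuity of $\psi$.

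Next I would split $\ln Z_n(u)$ into the centered stochastic integral plus the compensator: writing $\dd X_j(t) = \dd\bigl(X_j(t) - \int_0^t\lambda_\theta\bigr) + \lambda_\theta(t)\dd t$, the martingale part becomes a sum of $n$ independent centered integrals against the compensated Poisson processes, and the remaining deterministic part combines with $-n\int(\lambda_{\theta+u\varphi_n}-\lambda_\theta)$ to give $-n\int\bigl[\lambda_\theta\ln(\lambda_{\theta+u\varphi_n}/\lambda_\theta) - (\lambda_{\theta+u\varphi_n}-\lambda_\theta)\bigr]$. Taylor-expanding $g\ln(h/g) - (h-g) \approx -\tfrac12 (h-g)^2/g$ to second order in the small difference $h-g = \lambda_{\theta+u\varphi_n}-\lambda_\theta$, and using the change of variable $x = r(t-\theta)/\delta_n$ (as in the regular-case computation already displayed in the excerpt) together with $\varphi_n^2 = \delta_n/n$, the deterministic term should converge to $-\tfrac{u^2}{2}F$ with $F = r\ln((\psi(\theta)+r)/\psi(\theta))$ — this is exactly the Fisher-information computation for the regular model with $\delta=\delta_n$, scaled by $n\varphi_n^2 = \delta_n$, i.e.\ $\varphi_n^{-2}\cdot$ nothing, rather $I(\theta)\delta_n/r \cdot r = $ the constant $F$. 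For the martingale part I would identify its leading contribution as $u\Delta_n$ with $\Delta_n$ as stated, check that $\Ex_\theta\Delta_n = 0$ is consistent (the $r\sqrt{n\delta_n}$ term being precisely the compensator of the stochastic integral, written out), and compute $\mathrm{Var}_\theta(\Delta_n) \to F$ via the same change of variable. The asymptotic normality $\Delta_n\Longrightarrow\xi_F$ then follows from a triangular-array CLT (Lindeberg or Lyapunov) for the sum of $n$ i.i.d.\ integrals, after verifying the Lindeberg condition — which holds because each summand is of order $1/\sqrt{n}$ uniformly, the integrand $1/(\psi(\theta)+\tfrac{r}{\delta_n}(t-\theta))$ being bounded between $1/(\psi(\theta)+r)$ and $1/\psi(\theta)$ on the transition interval.

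The main obstacle, and where I would spend the most care, is controlling the remainder $\varepsilon_n(\theta,u)$ and showing it $\to 0$ in probability: this requires (i) bounding the error from freezing $\psi$ at $\psi(\theta)$, which is $O\bigl(\omega_\psi(\delta_n)\cdot(\text{stuff})\bigr)$ where $\omega_\psi$ is the modulus of continuity — fine since $\delta_n\to 0$; (ii) bounding the error from truncating the Taylor expansion of $g\ln(h/g)-(h-g)$ at second order, i.e.\ the third-order term, which is $O(n\int|h-g|^3/g^2)$ and must be shown to vanish — here the subtlety is that $h-g = \lambda_{\theta+u\varphi_n}-\lambda_\theta$ is itself of order $r$ (not small!) on the sliver of width $\varphi_n$ where exactly one of the two transition intervals has ended, so the naive "small difference" expansion fails there and one must treat that sliver separately, estimating its contribution directly as $O(n\varphi_n) = O(\sqrt{n/\delta_n})\cdot\delta_n$... — wait, $n\varphi_n = \sqrt{n\delta_n}\to\infty$, so this needs more care: the contribution of the mismatch sliver to the \emph{compensator difference} is $O(n\varphi_n\cdot 1) $, but it is exactly cancelled by a matching term in the stochastic integral up to a centered fluctuation of order $\sqrt{n\varphi_n}\cdot\varphi_n^{?}$... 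I would in fact handle this by noting that the mismatch region contributes to $\ln Z_n(u)$ a term of the form $\int$ (bounded)$\dd(\text{compensated process})$ over a set of measure $O(\varphi_n)$ per trajectory, whose variance is $O(n\varphi_n) \to\infty$ — so this cannot be right, meaning the correct bookkeeping must be that the $r\sqrt{n\delta_n}$ term in $\Delta_n$ already absorbs the sliver exactly and the genuine remainder only sees the \emph{overlap} region of width $O(\varphi_n)$ where both intensities are ramping with slightly shifted arguments, on which $|h-g| = \tfrac{r}{\delta_n}|u\varphi_n| = O(r\varphi_n/\delta_n) = O(r/\sqrt{n\delta_n})\to 0$ genuinely. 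So the cube integrates to $O(n\cdot\delta_n\cdot(1/\sqrt{n\delta_n})^3) = O(1/\sqrt{n\delta_n})\to 0$, and the corresponding martingale remainder has variance $O(1/(n\delta_n))\to 0$. Getting this decomposition — overlap region versus cleanly-resolved region, with the compensator terms matched to the exact $r\sqrt{n\delta_n}$ — correct and rigorous is the crux; everything else is routine. (iii) Finally I would verify uniformity in $u$ on compacts, which follows since all the above bounds are polynomial in $u$.

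Alternatively, and more economically, I would simply verify the three conditions of Theorem~2.1 of \citet{Kut98} — differentiability in quadratic mean of $\sqrt{\lambda_\theta^{(n)}}$ with the stated normalization, nondegeneracy of the limiting Fisher information $F(\theta)$, and its continuity in $\theta$ — which reduces precisely to the change-of-variable Fisher-information computation above together with the continuity of $\psi$, and then quote the theorem to obtain the representation and $\Delta_n\Longrightarrow\xi_F$ directly; this is presumably the route the authors take, and the real content is again the computation showing $\varphi_n^{-2}\cdot(\text{Fisher info of the }n\text{-model}) \to F(\theta)$, i.e.\ that $n\varphi_n^2 = \delta_n$ exactly compensates the $1/\delta_n$ blow-up of the per-observation Fisher information $I(\theta) = \tfrac r{\delta_n}\ln\bigl(\tfrac{\psi(\theta)+r}{\psi(\theta)}\bigr)$ (in the $\psi\equiv\lambda_0$ case; the integral formula in general).
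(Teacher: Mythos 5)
Your ``alternative'' route at the end is the paper's actual proof: the authors verify conditions $(A_1)$--$(A_3)$ of Theorem~2.1 of Kutoyants (1998) with the explicit choice $q_n(\theta,t)=\frac{-r/\delta_n}{\psi(\theta)+\frac r{\delta_n}(t-\theta)}\,\ind_{[\theta,\theta+\delta_n]}(t)$, compute $Q_n(\theta)=F\varphi_n^{-2}$ by the very change of variable you indicate, and check the Lindeberg-type condition and the two $L^2$-approximation conditions by splitting the integrals over exactly the three regions $[\theta,\theta_u)$, $[\theta_u,\theta+\delta_n)$, $[\theta+\delta_n,\theta+\delta_n+(\theta_u-\theta))$ that appear in your hands-on computation (their $B_1,B_2,B_3$ and $D_1,D_2,D_3$). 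Your primary route is the same argument with the black box unpacked, and the bookkeeping you finally settle on is the right one; the detour in the middle, however, deserves a correction. In the slow case there is \emph{no} sliver on which $\bigl|\lambda_{\theta+u\varphi_n}-\lambda_\theta\bigr|$ is of order $r$: since $u\varphi_n/\delta_n=u/\sqrt{n\delta_n}\to0$, the two ramps overlap on an interval of length $\approx\delta_n$ (not $O(\varphi_n)$), and on \emph{all three} regions --- including the two end slivers of length $u\varphi_n$, where one intensity has just started or just finished ramping --- the difference is bounded by $\frac r{\delta_n}\,u\varphi_n=ru/\sqrt{n\delta_n}\to0$ uniformly. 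The order-$r$ mismatch you worried about is a fast-case phenomenon and simply does not occur here; once this is observed, your remainder bound $O\bigl(n\,\delta_n\,(n\delta_n)^{-3/2}\bigr)=O\bigl((n\delta_n)^{-1/2}\bigr)$ is precisely what the paper's estimates (each $B_i,D_i\lesssim n\varphi_n^3/\delta_n^2=(n\delta_n)^{-1/2}$) deliver. One further small imprecision: the summands of $\Delta_n$ are not ``of order $1/\sqrt n$ uniformly'' --- each is $(n\delta_n)^{-1/2}$ times a bounded weight integrated against a Poisson count with mean $O(\delta_n)$ --- but the Lindeberg condition still holds trivially for that reason; this is exactly what $(A_2)$ checks, and in the paper the indicator there is in fact identically zero for $n$ large.
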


As it is usual in LAN situations, Theorem~\ref{Borne.CL} allows us to introduce the following definition of H\'ajek-Le Cam efficiency.
\begin{definition}
Suppose\/ $n\delta_n\to+\infty$. We say that an estimator\/ $\theta_n^*$ of the parameter\/ $\theta$ is asymptotically efficient if, for all\/ $\theta_0 \in \Theta$, we have
\[
\lim_{\epsilon \to 0}\ \lim_{n\to +\infty}\ \sup_{\bil| \theta-\theta_0 \bir|<\epsilon} \varphi_n^{-2} \ \Ex_{\theta} \bigl(\theta_n^* - \theta\bigr)^2 = \frac{1}{F(\theta_0)} \, .
\]
\end{definition}

Finally, the asymptotic properties of the MLE and of the BEs are given by the following theorem.
\begin{theorem}
\label{MLE-BE.CL}
Suppose\/ $n\delta_n\to+\infty$. Then, the MLE\/ $\hat{\theta}_n$ and, for any continuous and strictly positive prior density\/ $q$, the BE\/ $\tilde{\theta}_n$ have the following proprieties:
\begin{itemize}
\item $\hat{\theta}_n$ and\/ $\tilde{\theta}_n$ are consistent,
\item $\hat{\theta}_n$ and\/ $\tilde{\theta}_n$ are asymptotically normal with rate\/ $\varphi_n= \sqrt{\delta_n/n}$ and limit variance\/ $1/F$, that is:
\[
\varphi_n^{-1} \bigl(\hat{\theta}_n-\theta\bigr) \Longrightarrow \xi_{1/F}
\quad\text{and\/}\quad
\varphi_n^{-1} \bigl(\tilde{\theta}_n-\theta\bigr) \Longrightarrow \xi_{1/F} \, ,
\]
\item we have the convergence of polynomial moments, that is, for any\/ $p>0$, we have
\[
\lim_{n\to +\infty} \varphi_n^{-p} \, \Ex_\theta \bigl|\hat{\theta}_n-\theta\bigr|^p = \Ex {\bil| \xi_{1/F} \bir|}^p
\quad\text{and\/}\quad
\lim_{n\to +\infty} \varphi_n^{-p} \, \Ex_\theta \bigl|\tilde{\theta}_n-\theta\bigr|^p = \Ex {\bil| \xi_{1/F} \bir|}^p,
\]
\item $\hat{\theta}_n$ and\/ $\tilde{\theta}_n$ are asymptotically efficient.
\end{itemize}
\end{theorem}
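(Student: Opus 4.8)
The plan is to apply the general theorem of Ibragimov and Khasminskii on the behavior of the MLE and of the BEs (as presented, for instance, in Theorems~1.10.1 and~1.10.2 of \citet{IbHas81}, or in the Poissonian setting in \citet{Kut98}), which reduces everything to three properties of the normalized likelihood ratio $Z_n$: (i)~a uniform H\"older-type bound on the increments, namely $\Ex_\theta \bigl|Z_n^{1/2}(u_1) - Z_n^{1/2}(u_2)\bigr|^2 \leq C\,|u_1-u_2|^2$ on compacts (or with some positive power of $|u_1-u_2|$); (ii)~an exponential decay estimate $\Ex_\theta Z_n^{1/2}(u) \leq e^{-c\,u^2}$ for $|u|$ large, uniform in $n$; and (iii)~the finite-dimensional convergence $Z_n \Longrightarrow Z^\circ_F$, which in fact we already have in the much stronger form given by Lemma~\ref{LAN} (the LAN representation). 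Once these three are established, the cited theorem yields in one stroke the consistency, the convergence in distribution of $\varphi_n^{-1}(\hat\theta_n-\theta)$ and $\varphi_n^{-1}(\tilde\theta_n-\theta)$ to $\argsup_u Z^\circ_F(u) = \xi_F/F = \xi_{1/F}$ and $\int u Z^\circ_F / \int Z^\circ_F = \xi_{1/F}$ respectively, and the convergence of all polynomial moments.

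The key steps, in order, are as follows. First I would record the explicit form of $Z_n(u)$, splitting $\ln Z_n$ into its stochastic part (the sum of integrals against $dX_j$) and its compensator; here it is essential to notice that for $u$ in a compact set the intensities $\lambda_{\theta+u\varphi_n}$ and $\lambda_\theta$ differ only on an interval of length $O(\delta_n + \varphi_n) = O(\delta_n)$ (since $\varphi_n = \sqrt{\delta_n/n} \ll \delta_n$ when $n\delta_n\to+\infty$), so all the relevant integrals are over a shrinking interval. Second, I would establish (i): expand $\ln(\lambda_{\theta+u\varphi_n}/\lambda_\theta)$, bound the moments of the Poissonian stochastic integral via the explicit formula for exponential moments of Poisson integrals (the Poisson process has all moments and $\Ex e^{\int f\,dX} = e^{\int(e^f-1)\lambda}$), and use the change of variables $x = r(t-\theta)/\delta_n$ exactly as in the Fisher-information computation in the excerpt to see that the $\delta_n$ in front cancels against the $1/\delta_n$ from the slope, leaving a bound of order $|u_1-u_2|^2$ with a constant depending only on $r$, $\min\psi$, $\max\psi$ and the compact. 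Third, I would establish (ii): the function $u\mapsto \Ex_\theta Z_n^{1/2}(u)$ is computed explicitly (it is the exponential of the negative Hellinger-type integral $\int (\sqrt{\lambda_{\theta+u\varphi_n}} - \sqrt{\lambda_\theta})^2$, times $n$), and one shows this integral is bounded below by $c\,u^2$ for $|u|$ in the growing range $\UU_n$, again after the same rescaling; some care is needed for $|u|$ comparable to the endpoints of $\UU_n$, where one falls back on a cruder bound showing $\Ex_\theta Z_n^{1/2}(u) \leq e^{-c|u|}$ there. Fourth, with (i)--(iii) in hand I would invoke the Ibragimov--Khasminskii theorem to conclude all four bullet points, identifying the limit variance $1/F$ via the displayed identity $\argsup Z^\circ_F = \int u Z^\circ_F/\int Z^\circ_F = \xi_F/F \sim \N(0,1/F)$, and noting that asymptotic efficiency is immediate by comparing the limiting value $\varphi_n^{-2}\Ex_\theta(\cdot-\theta)^2 \to 1/F(\theta_0)$ with the H\'ajek--Le Cam lower bound of Theorem~\ref{Borne.CL}.

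I expect the main obstacle to be the uniformity in $n$ of the estimates (i) and (ii), since we are in a triangular-array setting where the intensity function itself depends on $n$ through $\delta_n$. The delicate point is that the ``useful'' part of the observation lives on an interval of length $\sim\delta_n\to 0$, on which the Poisson process sees only $\sim n\delta_n\to\infty$ points on average; the rate $\varphi_n=\sqrt{\delta_n/n}$ is precisely calibrated so that the Fisher-type quantity $n\varphi_n^2 \cdot (r/\delta_n)^2 \cdot \delta_n = r^2 n\varphi_n^2/\delta_n = r^2$ stays bounded, and one must track this cancellation carefully through every moment bound rather than absorbing constants carelessly. A secondary technical nuisance is handling the two boundary regions where $\theta+u\varphi_n$ approaches $\alpha$ or $\beta$: there $\UU_n$ is only a half-line-like neighbourhood and the large-deviation bound (ii) must be arranged so that the integrals $\int_{\UU_n}(1+|u|^p)\Ex_\theta Z_n^{1/2}(u)\,du$ remain bounded uniformly in $n$, which is what feeds the moment convergence in the third bullet.
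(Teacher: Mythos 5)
Your proposal is correct and follows essentially the same route as the paper: reduce to Theorems~1.10.1 and~1.10.2 of Ibragimov--Khasminskii via the finite-dimensional convergence already contained in the LAN property, a H\"older bound $\Ex_\theta\bigl|Z_n^{1/2}(u)-Z_n^{1/2}(v)\bigr|^2\leq C|u-v|^2$, and a tail bound of the form $\exp\{-\kappa\min\{|u|,u^2\}\}$, with the calibration $n\varphi_n^2/\delta_n=1$ doing exactly the work you describe. The only cosmetic difference is that the paper obtains both moment estimates from the ready-made Hellinger-distance bound (Lemma~1.5 of \citet{Kut98}) and an explicit piecewise computation of $\int(\lambda_{\theta+u\varphi_n}-\lambda_{\theta+v\varphi_n})^2\dd t$, rather than expanding the Poisson exponential moments by hand.
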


Note that the LAN property yields, in particular, the convergence of finite-dimensional distributions of the normalized likelihood ratio process $Z_n$ to those of the process~$Z^\circ_F$. So, in order to prove Theorem~\ref{MLE-BE.CL}, it is sufficient to establish two additional lemmas (which will be done in Section~\ref{proofs}) and apply Theorems~1.10.1 and~1.10.2 of \citet{IbHas81}.

\subsubsection*{Fast case}

In order to study the behavior of Bayesian estimators of $\theta$ in the fast case, we choose the likelihood normalization rate
\[
\varphi_n=\frac{1}{n} \, ,
\]
and we denote
\[
a=a(\theta)=\psi(\theta) \quad \text{and} \quad b=b(\theta)=\psi(\theta)+r.
\]
We also recall the random process $Z^\star_{a,b}$ defined by~\eqref{CP_LLR} and the random variable $\zeta_{a,b}$ defined by~\eqref{zeta_ab}.

Now we can state the following theorem giving a H\'ajek-Le Cam type lower bound on the mean squared errors of all the estimators.
\begin{theorem}
\label{Borne.CR}
Suppose\/ $n\delta_n\to 0$. Then, for all\/ $\theta_0 \in \Theta$, we have
\[
\lim_{\epsilon \to 0}\ \myliminf_{n\to +\infty}\ \inf_{\bar{\theta}_n}\ \sup_{\bil| \theta-\theta_0 \bir|<\epsilon} n^2 \ \Ex_{\theta} (\bar{\theta}_n- \theta)^2 \geq \Ex \zeta_{a(\theta_0),b(\theta_0)}^2,
\]
where the\/ $\inf$ is taken over all possible estimators\/ $\bar{\theta}_n$ of the parameter\/ $\theta$.
\end{theorem}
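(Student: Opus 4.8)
The plan is to derive Theorem~\ref{Borne.CR} as a consequence of a (local) weak convergence of the normalized likelihood ratio process $Z_n$ in the fast case, following the same Ibragimov--Khasminskii scheme used in the change-point case. First I would fix $\theta_0 \in \Theta$, choose the normalization $\varphi_n = 1/n$, and introduce the normalized likelihood ratio $Z_n(u) = Z_n^{(\theta_0)}(u)$ on $\UU_n = \bil]n(\alpha-\theta_0),n(\beta-\theta_0)\bir[$. The claim I would first establish is that the finite-dimensional distributions of $Z_n$ converge to those of $Z^\star_{a,b}$ with $a=a(\theta_0)$, $b=b(\theta_0)$. To see why this is plausible: under $\Pb_{\theta_0}$ the points of each $X_j$ lying in the transition window $[\theta_0,\theta_0+\delta_n]$ form, asymptotically, a negligible contribution because $n\delta_n \to 0$ means the expected number of such points across all $n$ copies is $O(n\delta_n)\to 0$; hence with probability tending to one the observation is indistinguishable from the pure change-point model \eqref{lambda-sing}. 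More precisely, writing $u/n = \theta_0 + u\varphi_n - \theta_0$, the integrand $\ln(\lambda_{\theta_0+u\varphi_n}/\lambda_{\theta_0})$ differs from the change-point log-ratio $\ln\bigl((\psi+r\ind_{\{t\ge\theta_0+u/n\}})/(\psi+r\ind_{\{t\ge\theta_0\}})\bigr)$ only on a set of measure $O(\delta_n)$ near $\theta_0$ and near $\theta_0+u/n$, and the jump parts of the two $\ind$-functions get counted against a Poisson random measure whose intensity, restricted to a window of length $|u|/n$, converges (after the $n$-fold superposition) to the two-sided Poisson process description. This gives the convergence of the exponent, and hence of the finite-dimensional distributions, to the process $Z^\star_{a,b}$ of \eqref{CP_LLR}.

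Second, I would verify the two Ibragimov--Khasminskii moment estimates for $Z_n$ that underpin their general minimax bound (Theorem~I.9.1 of \citet{IbHas81}): a bound of the form $\Ex_{\theta_0+u\varphi_n}\bigl|Z_n^{1/2}(u_2)-Z_n^{1/2}(u_1)\bigr|^2 \le C|u_2-u_1|$ (or a Hölder version sufficient for tightness of the relevant functionals), and an exponential-type tail bound $\Ex_{\theta_0} Z_n^{1/2}(u) \le e^{-c|u|}$ for $|u|$ large, uniformly in $n$ and locally uniformly in $\theta_0$. Both of these are established for the change-point model in \citet{Kut84,Kut98}; the point is that they survive the perturbation by the vanishing transition window. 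The tail bound follows from computing the Hellinger-type integral $\Ex_{\theta_0}\exp\bigl\{\tfrac12\sum_j\int\ln(\lambda_{\theta_0+u/n}/\lambda_{\theta_0})\dd X_j - \tfrac n2\int(\lambda_{\theta_0+u/n}-\lambda_{\theta_0})\bigr\}$, which equals $\exp\{-n\int(\sqrt{\lambda_{\theta_0+u/n}}-\sqrt{\lambda_{\theta_0}})^2\}$; this last integral is $\asymp |u|/n$ away from the transition windows plus an $O(\delta_n)$ correction, so $n$ times it stays $\gtrsim |u|$ for $n$ large, as required. The increment bound is handled similarly.

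Third, given the finite-dimensional convergence and the two moment estimates, the Ibragimov--Khasminskii minimax theorem applies: it yields
\[
\lim_{\epsilon\to 0}\ \myliminf_{n\to+\infty}\ \inf_{\bar\theta_n}\ \sup_{|\theta-\theta_0|<\epsilon} \varphi_n^{-2}\,\Ex_\theta(\bar\theta_n-\theta)^2 \ \geq\ \Ex\,\zeta^2,
\]
where $\zeta$ is the Bayesian-type functional $\frac{\int u Z(u)\dd u}{\int Z(u)\dd u}$ of the limit process $Z=Z^\star_{a,b}$, i.e.\ exactly $\zeta_{a(\theta_0),b(\theta_0)}$ as defined by \eqref{zeta_ab}. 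Since $\varphi_n = 1/n$, this is precisely the asserted bound. I would also need to check that $\Ex\,\zeta_{a,b}^2 < \infty$, which follows from the exponential tails of $Z^\star_{a,b}$ (equivalently, from the finiteness of $\Ex\zeta_\rho^2$ recorded after \eqref{zeta_ab}), and that all estimates are uniform over a neighbourhood of $\theta_0$ in $\theta$, which is what allows the $\sup_{|\theta-\theta_0|<\epsilon}$ inside the liminf.

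The main obstacle I expect is making rigorous the claim that the transition-window contribution to the likelihood ratio is asymptotically negligible \emph{with the uniformity needed} for the Ibragimov--Khasminskii machinery — that is, not merely that individual finite-dimensional distributions converge, but that the increment and tail moment bounds hold with constants independent of $n$ even though the intensity $\lambda_{\theta}^{(n)}$ genuinely depends on $n$ and has a derivative blowing up like $r/\delta_n$ inside the shrinking window. The delicate point is the region where the argument $\theta_0+u/n$ of the shifted intensity is within $\delta_n$ of $\theta_0$ (possible precisely because $u/n$ and $\delta_n$ are comparable when $u \asymp n\delta_n \to 0$, i.e.\ for bounded $u$): there the two intensities overlap in their transition windows and one must show the corresponding $\int(\sqrt{\lambda_{\theta_0+u/n}}-\sqrt{\lambda_{\theta_0}})^2$ is still $\gtrsim |u|/n$ up to lower-order terms. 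A careful case analysis of the relative position of the two windows $[\theta_0,\theta_0+\delta_n]$ and $[\theta_0+u/n,\theta_0+u/n+\delta_n]$, together with the elementary bound $(\sqrt x-\sqrt y)^2 \ge c\,(x-y)^2/(x\vee y)$ on the relevant ranges, should settle this.
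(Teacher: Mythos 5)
Your proposal follows essentially the same route as the paper: the lower bound is obtained from Theorem~1.9.1 of \citet{IbHas81} by establishing (i) convergence of the finite-dimensional distributions of $Z_n$ to those of $Z^\star_{a,b}$, (ii) the H\"older-type increment bound $\Ex_\theta\bigl|Z_n^{1/2}(u)-Z_n^{1/2}(v)\bigr|^2\le C|u-v|$, and (iii) the tail bound via the identity $\Ex_\theta Z_n^{1/2}(u)=\exp\{-n\int_0^\tau(\sqrt{\lambda_{\theta+u\varphi_n}}-\sqrt{\lambda_\theta}\,)^2\dd t\}$, with exactly the two-case analysis ($u\varphi_n\lessgtr\delta_n$) of the overlapping transition windows that you flag as the delicate point. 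The only cosmetic difference is that the paper proves the finite-dimensional convergence by an explicit computation of characteristic functions of $(\ln Z_n(u),\ln Z_n(v))$ rather than by your ``no points fall in the $O(\delta_n)$ windows with probability tending to one'' coupling argument, but both are sound and lead to the same limit.
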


This theorem allows us to introduce the following definition.
\begin{definition}
Suppose\/ $n\delta_n\to 0$. We say that an estimator\/ $\theta_n^*$ of the parameter\/ $\theta$ is asymptotically efficient if, for all\/ $\theta_0 \in \Theta$, we have
\[
\lim_{\epsilon \to 0}\ \lim_{n\to +\infty}\ \sup_{\bil| \theta-\theta_0 \bir|<\epsilon} n^2 \ \Ex_{\theta} ( \theta_n^* - \theta)^2 = \Ex \zeta_{a(\theta_0),b(\theta_0)}^2.
\]
\end{definition}

Finally, the asymptotic properties of the BEs are given by the following theorem.
\begin{theorem}
\label{BE.CR}
Suppose\/ $n\delta_n\to 0$. Then, for any continuous and strictly positive prior density\/ $q$, the BE\/ $\tilde{\theta}_n$ has the following proprieties:
\begin{itemize}
\item $\tilde{\theta}_n$ is consistent,
\item $\tilde{\theta}_n$ converges at rate\/ $\varphi_n = 1/n$ and its limit law is that of the random variable\/ $\zeta_{a,b}$, that is:
\[
n \bigl(\tilde{\theta}_n-\theta\bigr) \Longrightarrow \zeta_{a,b},
\]
\item we have the convergence of polynomial moments, that is, for any\/ $p>0$, we have
\[
\lim_{n\to +\infty} n^p \, \Ex_\theta \bigl|\tilde{\theta}_n-\theta\bigr|^p = \Ex {\bil| \zeta_{a,b} \bir|}^p,
\]
\item $\tilde{\theta}_n$ is asymptotically efficient.
\end{itemize}
\end{theorem}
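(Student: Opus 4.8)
The plan is to follow the Ibragimov--Khasminskii programme, exactly as it was carried out in the change-point case (see \citet{Kut84,Kut98}), applying Theorem~1.10.2 of \citet{IbHas81} to the normalized likelihood ratio process $Z_n(u)=Z_n^{(\theta)}(u)$ with the normalization rate $\varphi_n=1/n$. Concretely, it suffices to establish three properties of $Z_n$, viewed as a random element of $\C_0(\RR)$ (equivalently, of $\D_0(\RR)$): (i) the finite-dimensional distributions of $Z_n$ converge to those of $Z^\star_{a,b}$; (ii) a uniform estimate of the form $\Ex_\theta\bigl|Z_n^{1/2}(u_1)-Z_n^{1/2}(u_2)\bigr|^2\le C\,|u_1-u_2|$ on compacts (or the weaker version with $|u_1-u_2|^{\gamma}$, $\gamma>0$, and a bounded factor), which gives tightness of the family $\{Z_n\}$ together with the needed control near a discontinuity; (iii) an exponential decay estimate $\Ex_\theta Z_n^{1/2}(u)\le \exp\{-\kappa\,|u|\}$ for all $u\in\UU_n$, with $\kappa>0$ uniform in $n$ and locally uniform in $\theta$. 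Once (i)--(iii) hold, Theorem~1.10.2 of \citet{IbHas81} yields, for the BE, the weak convergence $n(\tilde\theta_n-\theta)\Rightarrow\zeta_{a,b}$ (where $\zeta_{a,b}$ is the Bayes functional~\eqref{zeta_ab} of the limit process $Z^\star_{a,b}$), the convergence of all polynomial moments $n^p\,\Ex_\theta|\tilde\theta_n-\theta|^p\to\Ex|\zeta_{a,b}|^p$, and in particular consistency; asymptotic efficiency then follows by comparing the $p=2$ moment limit with the lower bound of Theorem~\ref{Borne.CR}.

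The first step is to write out $\ln Z_n(u)$ explicitly. For $u>0$, the intensities $\lambda_\theta$ and $\lambda_{\theta+u/n}$ differ only on the interval $[\theta,\theta+u/n+\delta_n]$, whose length is $u/n+\delta_n$; since $n\delta_n\to0$, this interval shrinks like $u/n$, i.e.\ at the same rate $1/n$ as in the pure change-point model, while the "smooth ramp" part contributes only a negligible $O(n\delta_n)$ correction. The key computation is that on $[\theta,\theta+u/n]$ the ratio $\lambda_{\theta+u/n}/\lambda_\theta$ is essentially $\psi(\theta)/(\psi(\theta)+r)=a/b$ (up to errors vanishing with $\delta_n$ and with the mesh $u/n$, using continuity of $\psi$), so that
\[
\ln Z_n(u)=\ln\Bigl(\frac ab\Bigr)\,\sum_{j=1}^n\bigl(X_j(\theta+u/n)-X_j(\theta)\bigr)+(b-a)u+o_{\Pb_\theta}(1),
\]
and the sum of increments over the $n$ independent processes on an interval of length $\sim u/n$ converges, by the standard Poisson "law of small numbers" / thinning argument, to $Y^+(u)$, a Poisson process of intensity $b$. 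The symmetric computation for $u<0$ gives the second branch of~\eqref{CP_LLR} with $Y^-$ of intensity $a$; independence of $Y^+$ and $Y^-$ comes from disjointness of the relevant intervals. This establishes (i). For (ii) and (iii), the estimates are obtained exactly as in the change-point case: one bounds $\Ex_\theta Z_n^{1/2}(u)$ using the identity $\Ex_\theta Z_n^{1/2}(u)=\exp\bigl\{-n\int_0^\tau\bigl(\sqrt{\lambda_{\theta+u/n}}-\sqrt{\lambda_\theta}\bigr)^2\dd t\bigr\}$, and checks that this Hellinger-type exponent grows linearly in $|u|$ (with the $\delta_n$-dependent ramp again contributing only lower-order terms because $n\delta_n\to0$); the increment bound (ii) is handled similarly, splitting the difference of intensities over the short interval and using that $\psi$ is bounded away from $0$.

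The main obstacle I expect is bookkeeping the two small scales simultaneously: the translation scale $u/n$ and the ramp width $\delta_n$, which here satisfy $\delta_n\ll 1/n$. One must verify that every occurrence of the ramp — in the stochastic integrals $\int\ln(\lambda_{\theta+u/n}/\lambda_\theta)\dd X_j$ and in the compensator $n\int(\lambda_{\theta+u/n}-\lambda_\theta)\dd t$ — produces only terms that are $o(1)$ uniformly for $u$ in compacts (for the fidi convergence) and, for the tail estimate, terms that do not spoil the linear-in-$|u|$ lower bound on the Hellinger exponent uniformly in $n$. A subtle point is that for $u$ not too large the interval $[\theta,\theta+u/n]$ may be shorter than $\delta_n$ is \emph{not} the issue (since $\delta_n\ll1/n$, in fact $\delta_n\ll u/n$ for any fixed $u>0$ and $n$ large), so for fixed $u$ the ramp is entirely negligible; care is only needed in the uniform-in-$u$ estimates, where one must treat small $|u|$ (comparable to $n\delta_n$, hence tending to $0$) separately, but there $Z_n(u)$ is close to $1$ anyway and the required bounds are trivially satisfied. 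Apart from this two-scale accounting, the argument is a routine transcription of the change-point analysis of \citet{Kut98}, since $n\delta_n\to0$ makes the model asymptotically indistinguishable from the discontinuous one~\eqref{lambda-sing}.
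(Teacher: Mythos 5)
Your plan matches the paper's proof: it verifies exactly the same three conditions (convergence of finite-dimensional distributions to $Z^\star_{a,b}$, the bound $\Ex_\theta|Z_n^{1/2}(u)-Z_n^{1/2}(v)|^2\le C|u-v|$, and the exponential tail estimate on $\Ex_\theta Z_n^{1/2}(u)$) and then applies Theorems~1.9.1 and~1.10.2 of Ibragimov--Khasminskii, the only substantive difference being that the paper establishes the finite-dimensional convergence by an explicit characteristic-function computation rather than your direct Poisson-increment representation, and states the tail bound in the (correct for $|u|<n\delta_n$) form $\exp\{-\kappa\min\{|u|,u^2\}\}$. One framing correction: tightness of $Z_n$ in $\C_0(\RR)$ or $\D_0(\RR)$ is neither available nor needed here --- the continuous processes $Z_n$ cannot converge to the discontinuous $Z^\star_{a,b}$ in either topology, which is exactly why the paper postpones the MLE in the fast case; for the BEs the finite-dimensional convergence together with the two moment lemmas suffices.
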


The argument behind Theorems~\ref{Borne.CR} and~\ref{BE.CR} is the likelihood ratio analysis method of \citeauthor{IbHas81}. However, as we already mentioned, the convergence of the normalized likelihood ratio $Z_n$ to the limit likelihood ratio $Z^\star_{a,b}$ can not take place neither in the topology induced by the $\sup$ norm, nor in the usual Skorokhod topology. Nevertheless, the convergence in a functional space is only needed for the properties of the MLE; in order to obtain the lower bound and the properties of the BEs, it is sufficient to establish the convergence of finite-dimensional distributions together with two additional lemmas (which will be done in Section~\ref{proofs}) and apply Theorems~1.9.1 and~1.10.2 of \citet{IbHas81}.

\section{Proofs}
\label{proofs}

In order to simplify the exposition and make the ideas of the proofs clearer, we present them in the particular case $\psi\equiv \lambda_0$. Note that as in singular problems all the information usually comes from the vicinity of the singularity, this is not a real loss of generality. Moreover, the given proofs can be easily extended to the general case (some details are given where necessary).

\subsubsection*{Slow case}

As we already explained above, Lemma~\ref{LAN} will be proved by applying Theorem~2.1 of \citet{Kut98}. So, we need to check the conditions of this theorem.

For this, remind that our model of observation is equivalent to observing a single realization on $[0, \tau]$ of a Poisson process $Y^{(n)}=\bigl(Y^{(n)}(t),\ 0\leq t\leq \tau \bigr)$ of intensity function~$\Lambda_{\theta}^{(n)} = n \, \lambda_{\theta}^{(n)}$. The process $Y^{(n)}$ can be defined, for example, by $Y^{(n)}(t) = \sum_{j=1}^{n} X_j(t)$.

Denote
\[
S_n(\theta_1,\theta_2, t) = \frac{\Lambda_{\theta_1}^{(n)}(t)}{\Lambda_{\theta_2}^{(n)}(t)} = \frac{\lambda_{\theta_1}^{(n)}(t)}{\lambda_{\theta_2}^{(n)}(t)}
\]
for all $n\in \NN$, $t \in [0,\tau]$ and $\theta_1,\theta_2 \in \Theta$.

The conditions of Theorem~2.1 of \citet{Kut98} are now the following.

\paragraph*{\boldmath$(A_1)$}
For all $n\in \NN$, the intensity measures corresponding to (having them as Radon-Nikodym derivatives) the intensity functions $\Lambda_{\theta}^{(n)}$, $\theta \in \Theta$, are equivalent.

\paragraph*{\boldmath$(A_2)$}
For all $n \in \NN$, there exist some function $q_n$ on $\Theta \times [0,\tau]$, such that
\[
Q_n(\theta)
=
\int_0^\tau q_n(\theta,t)^2 \Lambda_\theta^{(n)}(t) \dd t
\]
is positive for all $\theta \in \Theta$ and, for any $\epsilon>0$, it holds
\[
\int_0^\tau \Bigl| Q_n^{-\frac{1}{2}} (\theta) q_n(\theta,t) \Bigr|^2 \ind_{ \bigl\{ \bigl| Q_n^{-\frac{1}{2}} (\theta) q_n(\theta,t) \bigr|>\epsilon\bigr\} } \Lambda_{\theta}^{(n)} (t) \dd t \xrightarrow[n\to +\infty]{} 0.
\]
\paragraph*{\boldmath$(A_3)$}
Let $\theta_u=\theta + u \, Q_n^{-\frac{1}{2}}(\theta)$ and $\UU'_n = \{u: \, \theta_u \in \Theta\}$. For any $u \in \RR$, we have $u \in \UU'_n$ for~$n$ sufficiently large, and it holds
\begin{align}
&\int_0^\tau
\Bigl[ \ln S_n(\theta_u,\theta,t) - u \, Q_n^{-\frac{1}{2}} (\theta) q_n(\theta,t) \Bigr]^2 \Lambda_{\theta}^{(n)} (t) \dd t \xrightarrow[n\to +\infty]{} 0 \label{A31} \\*
\intertext{and}
&\int_0^\tau
\biggl[
S_n(\theta_u,\theta,t) -1- \ln S_n(\theta_u,\theta,t) -\frac{1}{2} \Bigl( u \, Q_n^{-\frac{1}{2}} (\theta) q_n(\theta,t) \Bigr)^2
\biggr]
\Lambda_{\theta}^{(n)}(t) \dd t
\xrightarrow[n\to +\infty]{} 0 . \label{A32}
\end{align}
\paragraph{}
Since the intensity functions $\Lambda_{\theta}^{(n)}$, $\theta \in \Theta$, are strictly positive, the condition $(A_1)$ is trivially verified.

Now, in order to prove the conditions $(A_2)$ and $(A_3)$, we put
\[
q_n(\theta,t)
=
\frac{-\frac{r}{\delta_n}}{\lambda_0 + \frac{r}{\delta_n} \, (t-\theta)} \,\ind_{\bil[\theta, \theta+\delta_n\bir]} (t),
\]
and so
\[
Q_n(\theta)
=
n \int_\theta^{\theta+\delta_n} \frac{\bigl( -\frac{r}{\delta_n} \bigr)^2}{\lambda_0 + \frac{r}{\delta_n} \, (t-\theta)} \, \dd t
=
r \, \frac{n}{\delta_n} \int_{\lambda_0}^{\lambda_0+r} \frac1x \, \dd x
=
r \ln \Bigl( \frac{\lambda_0+r}{\lambda_0} \Bigr)\frac{n}{\delta_n}
=
F\varphi_n^{-2},
\]
where we used the change of variable
\[
x=\lambda_0 + \frac{r}{\delta_n} \, (t-\theta).
\]
Therefore, we have
\[
Q_n^{-\frac{1}{2}} (\theta)
=
\frac{1}{\sqrt{r \ln \bigl( \frac{\lambda_0+r}{\lambda_0} \bigr)}}\sqrt{\frac{\delta_n}{n}}
=
\frac{\varphi_n}{\sqrt{F}}
=
\gamma \varphi_n,
\]
where we denoted
\[
\gamma
=
\frac{1}{\sqrt{F}} \, .
\]

\begin{proof}[Proof of $(A_2)$]
Let $\theta \in \Theta$ and $\epsilon>0$. Denoting
\[
F_n=
\int_0^\tau \Bigl| Q_n^{-\frac{1}{2}} (\theta) q_n(\theta,t) \Bigr|^2 \ind_{ \bigl\{ \bigl| Q_n^{-\frac{1}{2}} (\theta) q_n(\theta,t) \bigr|>\epsilon\bigr\} } \Lambda_{\theta}^{(n)} (t) \dd t,
\]
we need to prove that
\[
F_n \xrightarrow[n\to +\infty]{} 0.
\]

We can write
\begin{align*}
F_n
&=
\frac{(r\gamma)^2}{\delta_n} \int_\theta^{\theta+\delta_n} \frac{1}{\lambda_0 + \frac{r}{\delta_n} \, (t-\theta)} \, \ind_{ \Bigl\{ \bigl|- \, \frac{1}{\sqrt{n\delta_n}} \, \frac{r\gamma}{\lambda_0 + \frac{r}{\delta_n} \, (t-\theta)} \bigr| \, >\epsilon \Bigr\} } \dd t \\*
&=
\frac{1}{\ln \bigl( \frac{\lambda_0 +r}{\lambda_0} \bigr)} \int_{\lambda_0}^{\lambda_0 +r} \frac{1}{x} \, \ind_{\bigl\{ \frac{\bil|r\bir| \gamma}{\epsilon \sqrt{n\delta_n}} > x \bigr\} } \dd x,
\end{align*}
where we used again the change of variable
\[
x=\lambda_0 + \frac{r}{\delta_n} \, (t-\theta).
\]

Since
\[
n\delta_n \xrightarrow[n\to +\infty]{} +\infty,
\]
we have
\[
F_n = 0
\]
for $n$ sufficiently large, and so we get the condition $(A_2)$.
\end{proof}
\begin{proof}[Proof of $(A_3)$]
First of all, let us note that $\UU'_n=\bil]\varphi_n^{-1} \gamma^{-1} (\alpha-\theta),\varphi_n^{-1} \gamma^{-1} (\beta-\theta)\bir[$, and as~$\varphi_n \searrow 0$, we get $\UU'_n \uparrow \RR$. Thus, for any $u\in\RR$, we have $u\in\UU'_n$ for $n$ sufficiently large.

Further we consider the case where $u>0$ and $r>0$ only (the other cases can be treated in a similar way).

Note that since we are in the slow case, we have $\gamma u\varphi_n < \delta_n$ for $n$ sufficiently large. We present in Figure~\ref{fig-int-slow-case} the functions $\lambda_\theta$ and $\lambda_{\theta_u}$, where $\theta_u = \theta + u \, Q_n^{-\frac{1}{2}}(\theta) = \theta + \gamma u \varphi_n$, with~$u,r>0$ and $n\gg 1$.
\begin{figure}[!ht]
\centering
\includegraphics[scale=0.36]{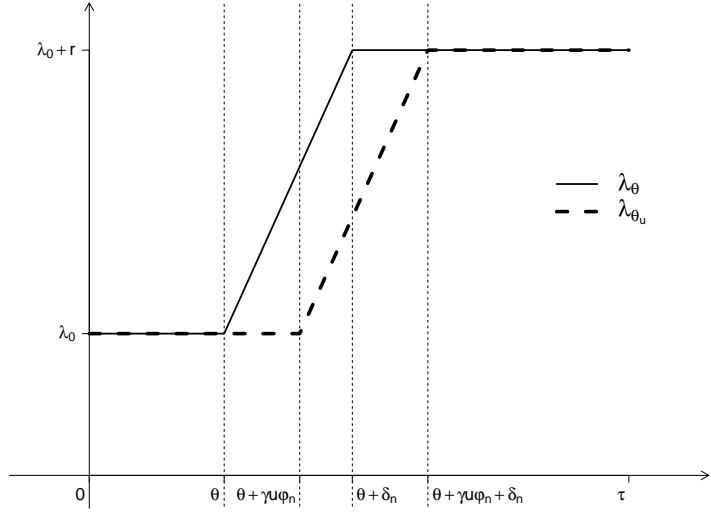}
\caption{$\lambda_\theta$ and $\lambda_{\theta_u}$ with $u,r>0$ and $n\gg 1$ (slow case)}
\label{fig-int-slow-case}
\end{figure}

In order to prove~\eqref{A31}, we denote
\[
G_n
=
\int_0^\tau
\Bigl[ \ln S_n(\theta_u,\theta,t) - u \, Q_n^{-\frac{1}{2}} (\theta) q_n(\theta,t) \Bigr]^2 \Lambda_{\theta}^{(n)} (t) \dd t .
\]
We can write
\begin{align*}
G_n
&=
n\int_\theta^{\theta+\gamma u\varphi_n} \Bigl( \lambda_0+\frac{r}{\delta_n} \, (t-\theta) \Bigr) \biggl[ \ln\biggl( \frac{\lambda_0}{\lambda_0+\frac{r}{\delta_n} \, (t-\theta)}\biggr) + \frac{\varphi_n}{\delta_n} \,\frac{\gamma ru}{\lambda_0+\frac{r}{\delta_n} \, (t-\theta)} \biggr] ^2 \!\dd t \\*
&\phantom{{}=}+
n\int_{\theta+\gamma u\varphi_n}^{\theta+\delta_n}
\Bigl(\lambda_0+\frac{r}{\delta_n} \, (t-\theta) \Bigr)
\biggl[
\ln\biggl( \frac{\lambda_0+\frac{r}{\delta_n} \, (t-\theta-\gamma u\varphi_n)}{\lambda_0+\frac{r}{\delta_n} \, (t-\theta)}\biggr)
+
\frac{\varphi_n}{\delta_n} \,\frac{\gamma ru}{\lambda_0+\frac{r}{\delta_n} \, (t-\theta)}
\biggr]^2
\!\dd t \\*
&\phantom{{}=}+
n\int_{\theta+\delta_n}^{\theta+\delta_n+\gamma u\varphi_n} (\lambda_0+r) \, \ln^2 \Biggl( \frac{\lambda_0+\frac{r}{\delta_n} \, (t-\theta-\gamma u\varphi_n)}{\lambda_0+r}\Biggr) \dd t \\*
&=
B_1+B_2+B_3
\end{align*}
with evident notations, and so it is sufficient to show that $B_1$, $B_2$ and~$B_3$ go to zero.

Let us verify, for example, that
\[
B_1 \xrightarrow[n\to +\infty]{} 0.
\]
Using the change of variable
\[
x=\frac{t-\theta}{\gamma u\varphi_n} \, ,
\]
we obtain
\[
B_1
=
\gamma un\varphi_n \int_0^1 \biggl(\lambda_0 +\gamma ru \, \frac{\varphi_n}{\delta_n} \, x \biggr)
\biggl[
\ln \biggl(1+\frac{\gamma ru}{\lambda_0} \, \frac{\varphi_n}{\delta_n} \, x \biggr)
-
\frac{\varphi_n}{\delta_n} \, \frac{\gamma ru}{\lambda_0+\gamma ru \, \frac{\varphi_n}{\delta_n} \, x}
\biggr]^2
\dd x.
\]
Since, for $0\leq x \leq 1$, we have
\[
\lambda_0 +\gamma ru \, \frac{\varphi_n}{\delta_n} \, x \leq \lambda_0+r
\]
and
\begin{align*}
\biggl[
\ln
\biggl(
1+\frac{\gamma ru}{\lambda_0} \, \frac{\varphi_n}{\delta_n} \, x
\biggr)
-
\frac{\varphi_n}{\delta_n} \, \frac{\gamma ru}{\lambda_0+\gamma ru \, \frac{\varphi_n}{\delta_n} \, x}
\biggr]^2
&\leq
\ln^2 \biggl(1+\frac{\gamma ru}{\lambda_0} \, \frac{\varphi_n}{\delta_n} \, x \biggr)
+
\biggl[
\frac{\varphi_n}{\delta_n} \, \frac{\gamma ru}{\lambda_0+\gamma ru \, \frac{\varphi_n}{\delta_n} \, x}
\biggr]^2 \\*
&\leq
\biggl[
\frac{\gamma ru}{\lambda_0} \, \frac{\varphi_n}{\delta_n} \, x
\biggr]^2
+
\biggl[
\frac{\varphi_n}{\delta_n} \, \frac{\gamma ru}{\lambda_0+\gamma ru \, \frac{\varphi_n}{\delta_n} \, x}
\biggr]^2, \\*
&\leq
2 \, \frac{(\gamma ru)^2}{\lambda_0^2}\, \frac{\varphi_n^2}{\delta_n^2} \, ,
\end{align*}
it comes
\[
B_1 \leq 2 \, \frac{(\lambda_0+r) (\gamma u)^3 r^2}{\lambda_0^2} \, \frac{n \varphi_n^3}{\delta_n^2} \,.
\]
Finally, as
\[
\frac{n \varphi_n^3}{\delta_n^2}
=\frac{1}{\sqrt{n\delta_n}}
\xrightarrow[n\to +\infty]{} 0,
\]
we get
\[
B_1 \xrightarrow[n\to +\infty]{} 0.
\]

Proceeding similarly, it is not difficult to verify that $B_2$ and $B_3$ go to zero as well, and~so~\eqref{A31} is proved.

Now, in order to prove~\eqref{A32}, we denote
\[
H_n = \int_0^\tau h_n(t) \dd t,
\]
where
\[
h_n(t) = \biggl[ S_n(\theta_u,\theta,t) -1- \ln S_n(\theta_u,\theta,t) -\frac{1}{2} \Bigl( u \, Q_n^{-\frac{1}{2}} (\theta) q_n(\theta,t) \Bigr)^2 \biggr] \Lambda_{\theta}^{(n)} (t).
\]
We have
\begin{align*}
\bil|H_n\bir|
&\leq
\int_\theta^{\theta+\gamma u\varphi_n} \bigl|h_n(t)\bigr| \dd t
+
\int_{\theta+\gamma u\varphi_n}^{\theta+\delta_n} \bigl|h_n(t)\bigr| \dd t
+
\int_{\theta+\delta_n}^{\theta+\gamma u\varphi_n+\delta_n} \bigl|h_n(t)\bigr| \dd t \\*
&=
D_1+D_2+D_3
\end{align*}
with evident notations, and so it is sufficient to show that $D_1$, $D_2$ and $D_3$ go to zero.

Let us verify, for example, that
\[
D_1 \xrightarrow[n\to +\infty]{} 0.
\]
Using the change of variable
\[
x=\frac{t-\theta}{\gamma u\varphi_n} \, ,
\]
we obtain
\begin{align*}
D_1
&=
n\int_\theta^{\theta+\gamma u\varphi_n}
\Bigl( \lambda_0+\frac{r}{\delta_n} \, (t-\theta) \Bigr) \,
\Biggl|
\frac{\lambda_0}{\lambda_0+\frac{r}{\delta_n} \, (t-\theta)}
-
1
-
\ln\biggl( \frac{\lambda_0}{\lambda_0+\frac{r}{\delta_n} \, (t-\theta)}\biggr) \\*
&\phantom{{}=n\int_\theta^{\theta+\gamma u\varphi_n} \Bigl( \lambda_0+\frac{r}{\delta_n} \, (t-\theta) \Bigr)
\biggl|}
-\frac{(\gamma ru)^2}{2} \, \frac{\varphi_n^2}{\delta_n^2} \, \frac{1}{\bigr( \lambda_0+\frac{r}{\delta_n} \, (t-\theta) \bigr)^2}
\Biggr| \dd t\\
&\leq
\gamma un\varphi_n \int_0^1
(\lambda_0+r) \,
\Biggl|
\frac{\lambda_0}{\lambda_0+\gamma ru \, \frac{\varphi_n}{\delta_n} \, x}
-
1
+
\ln \biggl(1+\frac{\gamma ru}{\lambda_0} \, \frac{\varphi_n}{\delta_n} \, x \biggr) \\*
&\phantom{{}=\gamma un\varphi_n \int_0^1 (\lambda_0+r) \Biggl|}
-
\frac{(\gamma ru)^2}{2} \, \frac{\varphi_n^2}{\delta_n^2} \, \frac{1}{\bigl( \lambda_0 +\gamma ru \, \frac{\varphi_n}{\delta_n} \, x\bigr)^2}
\Biggr|
\dd x \\
&=
\gamma un\varphi_n \int_0^1
(\lambda_0+r) \,
\Biggl|
\ln \biggl(1+\frac{\gamma ru}{\lambda_0} \, \frac{\varphi_n}{\delta_n} \, x \biggr)
-
\gamma ru \, \frac{\varphi_n}{\delta_n} \,
\frac{x}{\lambda_0+\gamma ru \, \frac{\varphi_n}{\delta_n} \, x} \\*
&\phantom{{}=\gamma un\varphi_n \int_0^1 (\lambda_0+r) \Biggl|}
-
\frac{(\gamma ru)^2}{2} \, \frac{\varphi_n^2}{\delta_n^2} \, \frac{1}{\bigl( \lambda_0+\gamma ru \, \frac{\varphi_n}{\delta_n} \, x\bigr)^2}
\Biggr|
\dd x.
\end{align*}
Since, for $0\leq x \leq 1$, we have
\begin{align*}
\Biggl|
\ln \biggl(1&+\frac{\gamma ru}{\lambda_0} \, \frac{\varphi_n}{\delta_n} \, x \biggr)
-\gamma ru \, \frac{\varphi_n}{\delta_n} \,
\frac{x}{\lambda_0+\gamma ru \, \frac{\varphi_n}{\delta_n} \, x}
-
\frac{(\gamma ru)^2}{2} \, \frac{\varphi_n^2}{\delta_n^2} \, \frac{1}{\bigl( \lambda_0+\gamma ru \, \frac{\varphi_n}{\delta_n} \, x\bigr)^2}
\Biggr| \\*
&\leq
\Biggl|
\ln \biggl(1+\frac{\gamma ru}{\lambda_0} \, \frac{\varphi_n}{\delta_n} \, x \biggr)
-
\frac{\gamma ru}{\lambda_0} \, \frac{\varphi_n}{\delta_n} \, x
\Biggr|
+
\Biggl|
\frac{\gamma ru}{\lambda_0} \, \frac{\varphi_n}{\delta_n} \, x
\biggl(
1
-
\frac{\lambda_0}{\lambda_0+\gamma ru \, \frac{\varphi_n}{\delta_n} \, x}
\biggr)
\Biggr| \\*
&\phantom{{}\leq}+
\frac{(\gamma ru)^2}{2} \, \frac{\varphi_n^2}{\delta_n^2} \, \frac{1}{\bigl( \lambda_0+\gamma ru \, \frac{\varphi_n}{\delta_n} \, x\bigr)^2} \\
&\leq
\frac12 \biggl( \frac{\gamma ru}{\lambda_0} \, \frac{\varphi_n}{\delta_n} \, x \biggr)^2
+
\frac{(\gamma ru)^2}{\lambda_0}\, \frac{\varphi_n^2}{\delta_n^2} \, \frac{x^2}{\lambda_0+\gamma ru \, \frac{\varphi_n}{\delta_n} \, x}
+
\frac{(\gamma ru)^2}{2} \, \frac{\varphi_n^2}{\delta_n^2} \,\frac{1}{\bigl( \lambda_0+\gamma ru \, \frac{\varphi_n}{\delta_n} \, x\bigr)^2}\\*
&\leq
2 \, \frac{(\gamma ru)^2}{\lambda_0^2}\, \frac{\varphi_n^2}{\delta_n^2} \, ,
\end{align*}
it comes
\[
D_1 \leq
2 \, \frac{(\lambda_0+r) (\gamma u)^3 r^2}{\lambda_0^2}\, \frac{n\varphi_n^3}{\delta_n^2}
\xrightarrow[n\to +\infty]{} 0.
\]

Proceeding similarly, it is not difficult to verify that $D_2$ and $D_3$ go to zero as well, and~so~\eqref{A32} is proved.
\end{proof}

Now, we can apply Theorem~2.1 of \citet{Kut98}, which yields that the family $\{ \Pb_\theta^{(n)}, \ \theta \in \Theta \}$ is LAN with rate
\[
\varphi'_n=Q_n^{-\frac{1}{2}} (\theta)=\gamma \varphi_n .
\]
More precisely, for all $u\in\UU'_n$, we have
\[
Z'_n(u) = \frac{\dd\Pb_{\theta+u\varphi'_n}\bigl(Y^{(n)}\bigr)}{\dd\Pb_{\theta}}
= \exp \Bigl\{ u\Delta'_n(\theta) - \frac{u^2}{2} + \varepsilon'_n(\theta, u) \Bigr\},
\]
where $\Delta'_n \Longrightarrow \xi_1$ is given by
\begin{align*}
\Delta'_n(\theta)
&=
Q_n^{-\frac{1}{2}}(\theta) \int_{0}^{\tau} q_n(\theta,t) \, \bigl[\mathrm{d} Y^{(n)}(t)-\Lambda^{(n)}_{\theta}(t) \dd t \bigr] \\*
&=
\gamma \sqrt{\frac{\delta_n}{n}} \int_{\theta}^{\theta+\delta_n} \frac{- \, \frac{r}{\delta_n}}{\lambda^{(n)}_{\theta}(t)} \, \bigl[\mathrm{d} Y^{(n)}(t)-n\lambda^{(n)}_{\theta}(t) \dd t \bigr] \\*
&=
- \frac{r\gamma}{\sqrt{n \delta_n}} \int_{\theta}^{\theta+\delta_n} \frac{1}{\lambda_0+\frac{r}{\delta_n} \, (t-\theta)} \, \dd Y^{(n)}(t)
+
r\gamma \sqrt{n\delta_n},
\end{align*}
and $\varepsilon'_n(\theta,u)$ converges to zero in probability.

Finally, it is clear that we can equivalently restate the LAN property using the rate
\[
\varphi_n=\frac{Q_n^{-\frac{1}{2}} (\theta)}{\gamma}=\sqrt{\frac{\delta_n}{n}}
\]
and the observations $X^{(n)}$. In this case, for all $u\in\UU_n$, we have
\[
Z_n(u) = \frac{\dd\Pb_{\theta+u\varphi_n}\bigl(X^{(n)}\bigr)}{\dd\Pb_{\theta}}
= \exp \Bigl\{ u\Delta_n(\theta) - \frac{u^2}{2}\, F + \varepsilon_n(\theta, u) \Bigr\},
\]
where $\Delta_n \Longrightarrow \xi_F$ is given by
\[
\Delta_n(\theta) =\frac{\Delta'_n(\theta)}{\gamma} =
- \frac{r}{\sqrt{n \delta_n}} \; \sum_{j=1}^{n} \int_{\theta}^{\theta+\delta_n} \frac{1}{\lambda_0+\frac{r}{\delta_n} \, (t-\theta)} \, \dd X_j(t)
+
r\sqrt{n\delta_n},
\]
and $\varepsilon_n(\theta,u) = \varepsilon'_n \bigl( \theta, \frac{u}{\gamma} \bigr)$ converges to zero in probability.

To wrap up this part, let us note that it is not very difficult to adapt the proofs of the conditions $(A_2)$ and $(A_3)$ to the case where $\psi$ is any strictly positive continuous (not necessarily constant) function on~$[0,\tau]$. In this case, taking
\[
q_n(\theta,t)
=
\frac{-\frac{r}{\delta_n}}{\psi(\theta) + \frac{r}{\delta_n} \, (t-\theta)} \,\ind_{\bil[\theta, \theta+\delta_n\bir]} (t),
\]
we get
\begin{align*}
Q_n(\theta)
&=
\int_0^\tau q_n^2(\theta,t) \Lambda_{\theta}^{(n)} (t) \dd t \\*
&=
\frac{n r^2}{\delta_n^2} \int_\theta^{\theta+\delta_n} \frac{\psi(t) + \frac{r}{\delta_n} \, (t-\theta)}{\Bigl(\psi(\theta) + \frac{r}{\delta_n} \, (t-\theta) \Bigr)^2} \, \dd t \\
&=
\frac{n r}{\delta_n} \int_0^r \frac{\psi \bigl( \theta +\frac{\delta_n}{r} x \bigr) +x}{(\psi(\theta) + x)^2} \, \dd x\\
&=
F\varphi_n^{-2} + r \varphi_n^{-2} \int_0^r \frac{\psi \bigl( \theta +\frac{\delta_n}{r} x \bigr) - \psi(\theta)}{(\psi(\theta) + x)^2} \, \dd x \\*
&=
F\varphi_n^{-2} \bigl(1+o(1)\bigr) .
\end{align*}

Note also, that since the function $\psi$ is continuous and strictly positive on $[0,\tau]$, it admits a minimum $m>0$ and a maximum $M>0$, which can replace $\lambda_0$ in different estimates used in the proofs of the condition $(A_3)$. For the case $r<0$, it is equally important to remind that we supposed that $r>-m$, and hence $M+r>m+r>0$.

Then, all the proofs can be easily adapted, and so we obtain Lemma~\ref{LAN} and, consequently,~Theorem~\ref{Borne.CL}.

\paragraph{}
Now, let us turn to the proof of Theorem~\ref{MLE-BE.CL}. As it was already noted above, it can be proved by applying Theorems~1.10.1 and~1.10.2 of \citet{IbHas81}. So, it is sufficient to check the conditions of these theorems. Since the convergence of finite-dimensional distributions of the normalized likelihood ratio process $Z_n$ to those of the process~$Z^\circ_F$ follows from the already established LAN property, it remains to prove the following two lemmas.
\begin{lemma}
Suppose\/ $n\delta_n\to+\infty$. Then, there exists a constant\/ $C>0$, such that for\/ $n$ sufficiently large, we have
\begin{equation}
\Ex_\theta \bigl\vert Z_n^{1/2} (u)- Z_n^{1/2} (v) \bigr\vert ^2 \leq C\bil| u - v \bir|^2
\label{C2.CL-eq}
\end{equation}
for all\/ $u,v \in \UU_n$ and\/ $\theta \in \Theta$.
\label{C2.CL}
\end{lemma}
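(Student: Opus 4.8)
The estimate \eqref{C2.CL-eq} is a standard Hellinger-type bound for likelihood ratios of Poisson processes, and I would prove it using the classical identity for the Hellinger distance between two Poisson process measures. Recall that for two Poisson processes with intensity functions $\mu_1$ and $\mu_2$ on $[0,\tau]$, one has
\[
\Ex_{*}\bigl\vert L_1^{1/2}-L_2^{1/2}\bigr\vert^2
=
2\left(1-\exp\left\{-\tfrac12\int_0^\tau\bigl(\sqrt{\mu_1(t)}-\sqrt{\mu_2(t)}\bigr)^2\dd t\right\}\right)
\leq
\int_0^\tau\bigl(\sqrt{\mu_1(t)}-\sqrt{\mu_2(t)}\bigr)^2\dd t .
\]
Applying this with $\mu_1 = n\lambda_{\theta+u\varphi_n}$ and $\mu_2 = n\lambda_{\theta+v\varphi_n}$, and using the fact (which follows from the likelihood ratio structure, cf.\ the change of measure from $\Pb_*$ to $\Pb_\theta$) that $\Ex_\theta\bigl\vert Z_n^{1/2}(u)-Z_n^{1/2}(v)\bigr\vert^2$ reduces to exactly such a Hellinger distance between the measures $\Pb_{\theta+u\varphi_n}$ and $\Pb_{\theta+v\varphi_n}$, I obtain
\[
\Ex_\theta \bigl\vert Z_n^{1/2}(u)-Z_n^{1/2}(v)\bigr\vert^2
\leq
n\int_0^\tau\Bigl(\sqrt{\lambda_{\theta+u\varphi_n}(t)}-\sqrt{\lambda_{\theta+v\varphi_n}(t)}\Bigr)^2\dd t .
\]
So the whole problem reduces to bounding this last integral by $C\vert u-v\vert^2$ uniformly in $\theta\in\Theta$ and for $n$ large.

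\textbf{Estimating the integral.} Write $w = \theta+u\varphi_n$, $w' = \theta+v\varphi_n$, and assume WLOG $w<w'$ and $r>0$ (the other sign being symmetric, with the reminder from the general-$\psi$ discussion that $m,M$ replace $\lambda_0$). The integrand $\bigl(\sqrt{\lambda_w(t)}-\sqrt{\lambda_{w'}(t)}\bigr)^2$ is supported on an interval of length at most $\delta_n + \vert w-w'\vert$, and one splits $[0,\tau]$ into the regions where both intensities are on the ``ramp'', where one is on the ramp and the other is flat (at level $\lambda_0$ or $\lambda_0+r$), and where both are flat but at different levels (an interval of length $\vert w-w'\vert = \vert u-v\vert\varphi_n$). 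On the last region the integrand equals $\bigl(\sqrt{\lambda_0+r}-\sqrt{\lambda_0}\bigr)^2$, a constant, contributing $O(\vert u-v\vert\varphi_n)$ after multiplying by $n$, i.e.\ $O\bigl(\vert u-v\vert\sqrt{n/\delta_n}\,\varphi_n^2/\varphi_n\bigr)$ — here I must be a little careful, since this is only $\vert u-v\vert^2$ up to the factor $\sqrt{n\delta_n}\cdot\varphi_n/\vert u-v\vert$; in fact $n\vert u-v\vert\varphi_n = \vert u-v\vert\sqrt{n/\delta_n}$, which is not $O(\vert u-v\vert^2)$ unless $\vert u-v\v/$ is bounded below, so one instead uses the elementary bound $\vert\sqrt{\lambda_0+r}-\sqrt{\lambda_0}\vert^2 \le C'\,(r/\delta_n)\vert w-w'\vert$ when $\vert w-w'\vert$ is small — no: the cleanest route is to note that on this region one actually has $\vert\lambda_w(t)-\lambda_{w'}(t)\vert$ at most $r$, but the region only appears when the two ramps have already ``separated'', which forces $\vert w - w'\vert$ comparable to $\delta_n$; then $\vert u-v\vert\varphi_n \gtrsim \delta_n$ so $n\vert u-v\vert\varphi_n \lesssim n\vert u-v\vert\varphi_n \cdot (\vert u-v\vert\varphi_n/\delta_n) = \vert u-v\vert^2 n\varphi_n^2/\delta_n = \vert u-v\vert^2$. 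On the ramp-overlap region and the ramp/flat regions, the mean value theorem on $t\mapsto\sqrt{\lambda_w(t)}$ gives $\vert\sqrt{\lambda_w(t)}-\sqrt{\lambda_{w'}(t)}\vert \le \sup\vert(\sqrt{\lambda_\cdot})'\vert\cdot\vert w-w'\vert$, and since $\lambda$ has slope $r/\delta_n$ on the ramp and is bounded below by $m$, the derivative of $\sqrt{\lambda}$ in $\theta$ is $O(1/\delta_n)$; squaring and multiplying by $n$ times the region length (at most $\delta_n$) gives $n\cdot\delta_n\cdot(1/\delta_n^2)\cdot\vert w-w'\vert^2 = (n/\delta_n)\vert u-v\vert^2\varphi_n^2 = \vert u-v\vert^2$. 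So in every region the contribution is $O(\vert u-v\vert^2)$ with a constant depending only on $m$, $M$, $r$, $\tau$ — uniformly in $\theta$ and in $n$ large.

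\textbf{Main obstacle.} The one genuine subtlety — and the point I would spend the most care on — is the bookkeeping of the various overlap configurations of the two ramps and the flat pieces, keeping track of which intervals have length $O(\delta_n)$ versus $O(\vert u-v\vert\varphi_n)$, and making sure the ``small'' scale $\vert u-v\vert\varphi_n$ can only exceed the ``ramp'' scale $\delta_n$ at the cost of a compensating factor. Everything then collapses because the normalization $\varphi_n = \sqrt{\delta_n/n}$ is precisely chosen so that $n\varphi_n^2/\delta_n = 1$, which is the arithmetic identity making each region's bound come out as $\vert u-v\vert^2$ with no residual $n$-dependence. For general $\psi$ one only needs, in addition, that $\psi$ is continuous with $0<m\le\psi\le M$ and $r>-m$, so that all intensities stay bounded away from $0$ and the derivative bounds for $\sqrt{\lambda_\theta}$ remain uniform; the estimates then go through verbatim with $\lambda_0$ replaced by $m$ (for lower bounds) and $M$ or $M+r$ (for upper bounds), as already noted above.
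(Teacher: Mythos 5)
Your proposal is correct and follows essentially the same route as the paper: the bound $\Ex_\theta \vert Z_n^{1/2}(u)-Z_n^{1/2}(v)\vert^2 \le n\int_0^\tau (\sqrt{\lambda_{\theta+u\varphi_n}}-\sqrt{\lambda_{\theta+v\varphi_n}})^2\,\mathrm{d}t$ (Lemma~1.5 of Kutoyants, which is exactly your Hellinger identity), followed by a region-by-region estimate of the intensity difference in which the normalization $n\varphi_n^2/\delta_n=1$ makes every piece come out as $O(\vert u-v\vert^2)$. The only divergence is that the paper first disposes of $\vert u-v\vert\ge 1$ by the trivial bound $4\le 4\vert u-v\vert^2$, so that for $n\delta_n\ge 1$ the two ramps always overlap and the ``both flat at different levels'' configuration never occurs, whereas you keep that configuration and correctly absorb it using the observation that its occurrence forces $\vert u-v\vert\varphi_n\ge\delta_n$; both arguments are valid.
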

\begin{lemma}
Suppose\/ $n\delta_n\to+\infty$. Then, there exists a constant\/ $\kappa>0$, such that for\/ $n$ sufficiently large, we have
\begin{equation}
\Ex_\theta Z_n^{1/2}(u) \leq \exp\bigl\{-\kappa \min \{\bil| u \bir|, u^2\} \bigr\}
\label{C3.CL-eq}
\end{equation}
for all\/ $u\in \UU_n$ and\/ $\theta \in \Theta$.
\label{C3.CL}
\end{lemma}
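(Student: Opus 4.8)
The plan is to estimate directly the quantity $\Ex_\theta Z_n^{1/2}(u)$ using the explicit formula for the Laplace transform of the normalized likelihood ratio of Poisson processes. Recall that for independent Poisson processes $X_1,\dots,X_n$ with intensity $\lambda_\theta^{(n)}$, one has the classical identity
\[
\Ex_\theta Z_n^{1/2}(u)
=
\exp\Biggl\{ -\,\frac n2 \int_0^\tau \Bigl( \sqrt{\lambda_{\theta+u\varphi_n}(t)} - \sqrt{\lambda_\theta(t)} \Bigr)^2 \dd t \Biggr\},
\]
so it suffices to bound from below the Hellinger-type integral
\[
g_n(u)
=
\frac n2 \int_0^\tau \Bigl( \sqrt{\lambda_{\theta+u\varphi_n}(t)} - \sqrt{\lambda_\theta(t)} \Bigr)^2 \dd t
\]
by $\kappa \min\{|u|,u^2\}$ uniformly in $\theta\in\Theta$ and for $n$ large. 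As in the proof of $(A_3)$, I would treat the case $u>0$, $r>0$ (the others being symmetric), and split the analysis according to whether $\gamma u\varphi_n < \delta_n$ (equivalently, $u$ small relative to $\sqrt{n\delta_n}$) or $\gamma u\varphi_n \geq \delta_n$.

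First, in the regime $|u|$ bounded (say $\gamma u\varphi_n<\delta_n$, which holds for every fixed $u$ once $n$ is large since $\varphi_n/\delta_n = 1/\sqrt{n\delta_n}\to0$), I would use the same decomposition of $[0,\tau]$ into the three intervals $[\theta,\theta+\gamma u\varphi_n]$, $[\theta+\gamma u\varphi_n,\theta+\delta_n]$, $[\theta+\delta_n,\theta+\delta_n+\gamma u\varphi_n]$ as in the proof of $(A_3)$, together with the change of variable $x=(t-\theta)/(\gamma u\varphi_n)$ on the first and last pieces. A second-order Taylor expansion of $t\mapsto\sqrt{\lambda_\theta(t)}$ (legitimate since $\lambda_\theta$ is bounded away from $0$ by $m$ and above by $M+r$, and $\varphi_n/\delta_n\to0$) shows that the middle integral contributes, to leading order, $\tfrac12 u^2 \cdot \tfrac14 Q_n(\theta)\varphi_n^2\bigl(1+o(1)\bigr) = \tfrac{F}{8} u^2(1+o(1))$ — i.e. exactly $\tfrac18$ of the Fisher-information term, as it should for a Hellinger integral — while the two boundary integrals are $O(u^3 n\varphi_n^3/\delta_n^2) = O(u^3/\sqrt{n\delta_n})$, hence negligible. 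This gives $g_n(u)\geq \tfrac{F}{16}u^2$ for $n$ large, uniformly over $\theta$, which is the $\min\{|u|,u^2\}=u^2$ branch. The main point to be careful with here is making the $o(1)$ and $O(\cdot)$ terms uniform in $\theta\in\bar\Theta$ and valid on the whole range $0<u\leq u_0$ simultaneously, not just for each fixed $u$; this is handled using the uniform bounds $m\leq\psi\leq M$ exactly as indicated at the end of the slow-case proof.

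The harder regime, and the main obstacle, is $|u|$ large — specifically $\gamma u\varphi_n\geq\delta_n$, where the shifted intensity $\lambda_{\theta+u\varphi_n}$ no longer overlaps the ``ramp'' of $\lambda_\theta$. Here I would argue that on an interval of length of order $\delta_n$ (the ramp of $\lambda_\theta$ near $\theta$, or the ramp of $\lambda_{\theta+u\varphi_n}$ near $\theta+u\varphi_n$, whichever lies where the other function is flat) the two square-rooted intensities differ by a fixed amount bounded below (something like $\sqrt{\lambda_0+r}-\sqrt{\lambda_0+r x}$ after rescaling), so that integral contributes at least a constant times $n\delta_n$; but since in the regime $\gamma u\varphi_n\geq\delta_n$ we have $u\geq \delta_n/(\gamma\varphi_n) = \sqrt{n\delta_n}/\gamma$, i.e. $n\delta_n \geq \gamma^2 u^2\varphi_n^2 n = \gamma^2 u^2 \delta_n$... this needs a cleaner bookkeeping. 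The correct comparison: once the supports of the two ramps are disjoint, the flat mismatch region has length at least $\gamma u\varphi_n - \delta_n$, on which $(\sqrt{\lambda_{\theta+u\varphi_n}}-\sqrt{\lambda_\theta})^2$ equals the constant $c_0:=(\sqrt{\lambda_0+r}-\sqrt{\lambda_0})^2>0$, so $g_n(u)\geq \tfrac n2 c_0(\gamma u\varphi_n-\delta_n) = \tfrac{c_0}{2}\bigl(\gamma u\,n\varphi_n - n\delta_n\bigr)$. Since $n\varphi_n = \sqrt{n/\delta_n}\cdot\sqrt{\delta_n\cdot\delta_n/\delta_n}$... more simply $n\varphi_n = \sqrt{n/\delta_n}$ and $n\delta_n\to+\infty$, while in this regime $u\geq\sqrt{n\delta_n}/\gamma$, so $\gamma u\,n\varphi_n \geq n\delta_n\cdot\sqrt{n/\delta_n}/\sqrt{n\delta_n}\cdot$... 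The upshot I expect is that in this regime $g_n(u)$ grows at least linearly in $|u|$ with a constant slope (bounded below uniformly in $n$ once $n\delta_n$ is large), giving the $\min\{|u|,u^2\}=|u|$ branch. I would also need to handle the boundary effect where $\theta+u\varphi_n$ approaches $\beta$, i.e. $u$ near the right end of $\UU_n$; there the estimate only needs to hold on $\UU_n$, and $|u|$ is bounded by $\varphi_n^{-1}(\beta-\alpha)$, so combining the two regimes and taking $\kappa$ the minimum of the two constants yields~\eqref{C3.CL-eq}. Getting all constants uniform in $\theta$ and stitching the two regimes together at the crossover $\gamma u\varphi_n\asymp\delta_n$ is the delicate bookkeeping that constitutes the real work of the proof.
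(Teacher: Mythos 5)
Your overall strategy is the same as the paper's: the exact Hellinger identity $\Ex_\theta Z_n^{1/2}(u)=\exp\bigl\{-\frac n2\int_0^\tau(\sqrt{\lambda_{\theta+u\varphi_n}}-\sqrt{\lambda_\theta})^2\dd t\bigr\}$ (Lemma~1.5 of Kutoyants), followed by a case split according to whether the shift exceeds $\delta_n$. (Two small slips first: the normalization in this lemma is $u\varphi_n$, not $\gamma u\varphi_n$ --- the factor $\gamma$ belongs to the LAN proof; and there is no boundary issue near $\beta$ to handle, the estimate is uniform on $\UU_n$ as it stands.) However, both of your regime arguments break down exactly at the crossover $u\varphi_n\asymp\delta_n$, i.e.\ $u\asymp\sqrt{n\delta_n}$, and you explicitly leave that stitching undone; that stitching is precisely the content of the lemma. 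Concretely: in the first regime the condition $u\varphi_n<\delta_n$ allows $u$ as large as $\sqrt{n\delta_n}\to+\infty$, so your claim that the boundary contributions $O(u^3/\sqrt{n\delta_n})$ are ``negligible'' compared to the quadratic term fails near the top of the regime, where $u^3/\sqrt{n\delta_n}\asymp u^2$; a Taylor expansion with an unsigned remainder cannot close this. In the second regime your lower bound $\frac n2\,c_0(u\varphi_n-\delta_n)$ degenerates to $0$ as $u\varphi_n\downarrow\delta_n$, so it only yields a linear-in-$u$ bound for $u$ at least a fixed multiple of $\delta_n/\varphi_n$, leaving the band in between uncovered.

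The paper closes both gaps by avoiding asymptotic expansions altogether: it bounds $(\sqrt a-\sqrt b)^2\geq(a-b)^2/(4(\lambda_0+r))$ and computes $\int_0^\tau(\lambda_{\theta+u\varphi_n}-\lambda_\theta)^2\dd t$ exactly on the three (resp.\ three) subintervals. In the first regime this gives $r^2\frac{\varphi_n^2}{\delta_n}u^2-\frac{r^2}{3}\frac{\varphi_n^3}{\delta_n^2}u^3$, and the regime condition $u\varphi_n\leq\delta_n$ shows the negative cubic term is at most one third of the quadratic one, leaving $\geq\frac23 r^2\frac{\varphi_n^2}{\delta_n}u^2$ with $n\varphi_n^2/\delta_n=1$. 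In the second regime the two ramp pieces each contribute $\frac{r^2}{3}\delta_n$, so the total is $r^2u\varphi_n-\frac{r^2}{3}\delta_n\geq\frac23 r^2u\varphi_n$, with $n\varphi_n=\sqrt{n\delta_n}\geq1$ for $n$ large (note your identity $n\varphi_n=\sqrt{n/\delta_n}$ is wrong). Passing to $\min\{|u|,u^2\}$ is then immediate since both $u^2$ and $|u|$ dominate it. If you replace your two heuristic estimates by these exact piecewise computations, your proof becomes the paper's.
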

\begin{proof}[Proof of Lemma~\ref{C2.CL}]
First of all, let us note that, as for $\bil| u - v \bir| \geq 1$ we have
\[
\Ex_\theta \bigl\vert Z_n^{1/2} (u)- Z_n^{1/2} (v) \bigr\vert ^2 \leq 4\leq 4\bil| u - v \bir|^2,
\]
it is sufficient to consider the case $\bil| u - v \bir| \leq 1$. Moreover, without loss of generality, we can suppose that $u<v$.

Further we consider the case $r>0$ only (the case $r<0$ can be treated in a similar way). Using Lemma~1.5 of \citet{Kut98}, we have
\begin{align*}
\Ex_\theta \bigl\vert Z_n^{1/2} (u)-Z_n^{1/2} (v) \bigr\vert ^2
&\leq
n\int_0^\tau \Bigl( \sqrt{\lambda_{\theta+u\varphi_n}(t)} - \sqrt{\lambda_{\theta+v\varphi_n}(t)} \mskip 2mu \Bigr)^2 \dd t\\*
&=
n\int_0^\tau \frac{\bigl( \lambda_{\theta+u\varphi_n}(t) - \lambda_{\theta+v\varphi_n}(t) \bigr)^2}{\bigl( \sqrt{\lambda_{\theta+u\varphi_n}(t)} + \sqrt{\lambda_{\theta+v\varphi_n}(t)} \mskip 2mu \bigr)^2} \, \dd t \\
&\leq
\frac{n}{4\lambda_0} \int_0^\tau \bigl( \lambda_{\theta+u\varphi_n}(t) - \lambda_{\theta+v\varphi_n}(t) \bigr)^2 \dd t \\*
&=
\frac{n}{4\lambda_0} \int_0^\tau f_n(t) \dd t,
\end{align*}
where we denoted $f_n(t)=\bigl( \lambda_{\theta+u\varphi_n}(t) - \lambda_{\theta+v\varphi_n}(t) \bigr)^2$.

Taking into account that $\delta_n/\varphi_n=\sqrt{n\delta _n}\to+\infty$, for $n$ sufficiently large (namely, such that~$n\delta_n\geq 1$) we have $(v-u)\varphi_n\leq \varphi_n \leq \delta_n$, and hence $v\varphi_n\leq u\varphi_n+\delta_n$. Therefore, we can write
\begin{align*}
\int_0^\tau f_n(t) \dd t &=
\int_{\theta+u\varphi_n}^{\theta+v\varphi_n} f_n(t) \dd t + \int_{\theta+v\varphi_n }^{\theta+u\varphi_n+\delta_n} f_n(t) \dd t + \int_{\theta+u\varphi_n +\delta_n}^{\theta+v\varphi_n+\delta_n} f_n(t) \dd t \\*
&=
E_1+E_2+E_3
\end{align*}
with evident notations.

For $E_1$, we have
\[
E_1 =
\int_{\theta+u\varphi_n}^{\theta+v\varphi_n} \Bigl( \frac{r}{\delta_n} ( t-\theta - u\varphi_n ) \Bigr)^2 \dd t =
\frac{r^2}{\delta_n^2} \, \frac{( v\varphi_n - u\varphi_n )^3}{3} =
\frac{r^2}{3} \, \frac{\varphi_n^3}{\delta_n^2} \, (v-u)^3,
\]
and proceeding similarly, we get
\[
E_2 =
r^2 \, \frac{\varphi_n^2}{\delta_n} \, (v-u)^2 -
r^2 \, \frac{\varphi_n^3}{\delta_n^2} \, (v-u)^3
\qquad
\text{and}
\qquad
E_3 =
\frac{r^2}{3} \, \frac{\varphi_n^3}{\delta_n^2} \, (v-u)^3.
\]
Thus, using the fact that $n\varphi_n^2/\delta_n=1$, we have
\[
\Ex_\theta \bigl\vert Z_n^{1/2} (u)- Z_n^{1/2} (v) \bigr\vert ^2 \leq
\frac{r^2}{4\lambda_0} \, \frac{n\varphi_n^2}{\delta_n} \, (v-u)^2 -
\frac{r^2}{12\lambda_0} \, \frac{n\varphi_n^3}{\delta_n^2} \, (v-u)^3
\leq \frac{r^2}{4\lambda_0} \, (v-u)^2,
\]
and so the inequality~\eqref{C2.CL-eq} is proved with $C = \max\bigl\{4,\frac{r^2}{4\lambda_0}\bigr\}$.
\end{proof}
\begin{proof}[Proof of Lemma~\ref{C3.CL}]
We consider the case where $u>0$ and $r>0$ only (the other cases can be treated in a similar way). Using Lemma~1.5 of \citet{Kut98}, we have
\begin{align*}
\Ex_\theta Z_n^{1/2}(u)
&=
\exp\biggl\{-\frac{n}{2} \int_0^\tau \Bigl( \sqrt{\lambda_{\theta+u\varphi_n}(t)} - \sqrt{\lambda_{\theta}(t)} \mskip 2mu \Bigr)^2 \dd t \biggr\}\\*
&=
\exp\biggl\{-\frac{n}{2} \int_0^\tau \frac{\bigl( \lambda_{\theta+u\varphi_n}(t) - \lambda_{\theta}(t) \bigr)^2}{\bigl( \sqrt{\lambda_{\theta+u\varphi_n}(t)} + \sqrt{\lambda_{\theta}(t)} \mskip 2mu \bigr)^2} \, \dd t\biggr\} \\
&\leq
\exp\biggl\{-\frac{n}{8(\lambda_0+r)} \int_0^\tau \bigl( \lambda_{\theta+u\varphi_n}(t) - \lambda_{\theta}(t) \bigr)^2 \dd t\biggr\} \\*
&=
\exp\biggl\{-\frac{n}{8(\lambda_0+r)} \int_0^\tau g_n(t) \dd t\biggr\},
\end{align*}
where we denoted $g_n(t)=\bigl( \lambda_{\theta+u\varphi_n}(t) - \lambda_{\theta}(t) \bigr)^2$.

Now we treat separately two cases: $u\leq\delta_n/\varphi_n$ and $u\geq\delta_n/\varphi_n$. In the first case, the situation is similar to that of Figure~\ref{fig-int-slow-case}, and so we obtain
\begin{align*}
\int_0^\tau g_n(t) \dd t
&=
\int_\theta^{\theta+u\varphi_n} g_n(t) \dd t +
\int_{\theta+u\varphi_n}^{\theta+\delta_n} g_n(t) \dd t +
\int_{\theta+\delta_n}^{\theta+u\varphi_n +\delta_n} g_n(t) \dd t \\*
&=
J_1+J_2+J_2
\end{align*}
with evident notations.

For $J_1$, we have
\[
J_1 =
\int_\theta^{\theta+u\varphi_n} \Bigl( \frac{r}{\delta_n} \, (t-\theta) \Bigr)^2 \dd t =
\frac{r^2}{3} \, \frac{\varphi_n^3}{\delta_n^2} \, u^3,
\]
and proceeding similarly, we obtain
\[
J_2 =
r^2 \, \frac{\varphi_n^2}{\delta_n} \, u^2 -
r^2 \, \frac{\varphi_n^3}{\delta_n^2} \, u^3
\qquad
\text{and}
\qquad
J_3 =
\frac{r^2}{3} \, \frac{\varphi_n^3}{\delta_n^2} \, u^3.
\]
Thus, using the fact that $u\leq\delta_n/\varphi_n$, we get
\begin{align}
\Ex_\theta Z_n^{1/2}(u)
&\leq
\exp\biggl\{-\frac{r^2}{8(\lambda_0+r)} \, \frac{n\varphi_n^2}{\delta_n} \, u^2 + \frac{r^2}{24(\lambda_0+r)} \, \frac{n\varphi_n^3}{\delta_n^2} \, u^3 \biggr\} \notag \\*
&\leq
\exp\biggl\{-\frac{r^2}{12(\lambda_0+r)} \, \frac{n\varphi_n^2}{\delta_n} \, u^2 \biggr\} \notag \\*
&\leq
\exp\biggl\{-\frac{r^2}{12(\lambda_0+r)} \, \frac{n\varphi_n^2}{\delta_n} \, \min \{u, u^2\} \biggr\},
\label{L3-eq1}
\end{align}
and recalling that $n\varphi_n^2/\delta_n=1$, we conclude that
\[
\Ex_\theta Z_n^{1/2}(u) \leq \exp\biggl\{-\frac{r^2}{12(\lambda_0+r)} \, \min\{u, u^2\} \biggr\}.
\]

In the second case ($u\geq \delta_n/\varphi_n$), the situation is that of Figure~\ref{fig-int-fast-case}, and so we obtain
\begin{align*}
\int_0^\tau g_n(t) \dd t
&=
\int_\theta^{\theta+\delta_n} g_n(t) \dd t +
\int_{\theta+\delta_n}^{\theta+ u\varphi_n} g_n(t) \dd t +
\int_{\theta+ u\varphi_n}^{\theta+u\varphi_n +\delta_n} g_n(t) \dd t \\*
&=
J'_1+J'_2+J'_2
\end{align*}
with evident notations.

For $J_1'$, we have
\[
J_1' =
\int_\theta^{\theta+\delta_n} \Bigl( \frac{r}{\delta_n} \, (t-\theta) \Bigr)^2 \dd t =
\frac{r^2}{3} \, \delta_n,
\]
and proceeding similarly, we obtain
\[
J_2' =
r^2 u \varphi_n -
r^2 \delta_n
\qquad
\text{and}
\qquad
J_3' =
\frac{r^2}{3} \, \delta_n.
\]
Thus, using the fact that $\delta_n\leq u\varphi_n$, we get
\begin{align}
\Ex_\theta Z_n^{1/2}(u)
&\leq
\exp\biggl\{-\frac{r^2}{8(\lambda_0+r)} \, n\varphi_n u + \frac{r^2}{24(\lambda_0+r)} \, n\delta_n \biggr\} \notag \\*
&\leq
\exp\biggl\{-\frac{r^2}{12(\lambda_0+r)} \, n\varphi_n u \biggr\} \notag \\*
&\leq
\exp\biggl\{-\frac{r^2}{12(\lambda_0+r)} \, n\varphi_n \min\{u, u^2\} \biggr\},
\label{L3-eq2}
\end{align}
and taking into account that $n\varphi_n=\sqrt{n\delta_n}\to +\infty$, for $n$ sufficiently large (namely, such that~$n\delta_n\geq 1$) we conclude again that
\[
\Ex_\theta Z_n^{1/2}(u) \leq \exp\biggl\{-\frac{r^2}{12(\lambda_0+r)} \, \min\{u, u^2\} \biggr\}.
\]
So the inequality~\eqref{C3.CL-eq} is proved with $\kappa =\frac{r^2}{12(\lambda_0+r)}\,$.
\end{proof}

Note that we can easily adapt the proofs of Lemmas~\ref{C2.CL} and~\ref{C3.CL} to the case where $\psi$ is any strictly positive continuous (not necessarily constant) function on $[0,\tau]$, and so Theorem~\ref{MLE-BE.CL} is proved.

\subsubsection*{Fast case}

As it was already noted above, Theorems~\ref{Borne.CR} and~\ref{BE.CR} can be proved by applying Theorems~1.9.1 and~1.10.2 of \citet{IbHas81}. So, it is sufficient to check the conditions of these theorems, that is, to prove the following three lemmas.

\begin{lemma}
Suppose\/ $n\delta_n\to 0$. Then, the finite-dimensional distributions of the process\/~$Z_n$ converge to those of the process\/ $Z^\star_{a,b}$ with\/ $a=\psi(\theta)$ and\/ $b=\psi(\theta)+r$.
\label{fidi}
\end{lemma}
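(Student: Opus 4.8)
The plan is to compute the log-likelihood ratio $\ln Z_n(u)$ with $\varphi_n=1/n$, split it into a (centered) stochastic part driven by the Poisson integrals and a deterministic compensator part, and show that as $n\to+\infty$ both parts converge (jointly, for finitely many values $u_1,\dots,u_k$) to the corresponding quantities defining $\ln Z^\star_{a,b}$. Throughout I work in the particular case $\psi\equiv\lambda_0$, so $a=\lambda_0$, $b=\lambda_0+r$; the general case follows by replacing $\lambda_0$ by $\psi(\theta)$ in the leading terms and controlling the $o(1)$ corrections coming from the continuity of $\psi$, exactly as was done in the slow case. Fix $u>0$ and $r>0$ first (the sign combinations $u<0$, $r<0$ are handled symmetrically, and this is what produces the two branches and the roles of $Y^+$, $Y^-$ in~\eqref{CP_LLR}). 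Since we are in the fast case, $n\delta_n\to0$, so for $n$ large the ``transition interval'' $[\theta,\theta+\delta_n)$ has length $\delta_n\ll u\varphi_n=u/n$, and the two bumps $[\theta,\theta+\delta_n)$ and $[\theta+u/n,\theta+u/n+\delta_n)$ are disjoint; the picture is exactly that of the change-point case smeared out over a negligible window.

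The key computation is as follows. Write
\[
\ln Z_n(u)=\sum_{j=1}^n\int_0^\tau \ln\!\Bigl(\tfrac{\lambda_{\theta+u/n}(t)}{\lambda_\theta(t)}\Bigr)\dd X_j(t)
- n\int_0^\tau\bigl(\lambda_{\theta+u/n}(t)-\lambda_\theta(t)\bigr)\dd t .
\]
The integrand of the deterministic term is supported on an interval of total length $O(\delta_n+u/n)$ on which $\lambda_{\theta+u/n}-\lambda_\theta$ is $O(r)$, but a careful evaluation (an elementary area computation, using that on the bulk of $[\theta+\delta_n,\theta+u/n]$ we have $\lambda_\theta=\lambda_0+r$ while $\lambda_{\theta+u/n}=\lambda_0$, plus two triangular corrections of area $O(\delta_n)$) gives $n\int_0^\tau(\lambda_{\theta+u/n}-\lambda_\theta)=-r\,u+O(n\delta_n)\to -r\,u=(b-a)u$, which is precisely the drift appearing in~\eqref{CP_LLR}. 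For the stochastic term, the integrand $\ln(\lambda_{\theta+u/n}/\lambda_\theta)$ is a function supported on $[\theta,\theta+u/n+\delta_n)$; on the bulk part $[\theta+\delta_n,\theta+u/n]$ it equals the constant $\ln(\lambda_0/(\lambda_0+r))=\ln(a/b)$, while near the two endpoints it varies over regions of length $O(\delta_n)$. So $\sum_j\int\ln(\cdots)\dd X_j(t)=\ln(a/b)\cdot X^{(n)}\!\bigl([\theta+\delta_n,\theta+u/n]\bigr)+R_n(u)$, where $X^{(n)}=\sum_j X_j$ is the superposed process (intensity $n\lambda$) and $R_n(u)$ is the contribution of the two boundary regions. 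The superposed process restricted to $[\theta,\theta+u\varphi_n]$, rescaled by $n$, has intensity $n\lambda_{\theta}(\theta+x/n)$ with $x\in[0,u]$, which converges (as a point process) to a Poisson process on $[0,u]$ of intensity $b=\lambda_0+r$ (since on $[\delta_n,u]$ the intensity is $n(\lambda_0+r)$ and the window $[0,\delta_n]$ becomes negligible). Hence $X^{(n)}([\theta+\delta_n,\theta+u/n])\Rightarrow Y^+(u)$, and $\ln Z_n(u)\Rightarrow \ln(a/b)\,Y^+(u)+(b-a)u=\ln Z^\star_{a,b}(u)$ on $\RR_+$. The joint convergence over $u_1,\dots,u_k$ follows because the superposed process on a fixed interval $[\theta,\theta+T/n]$ converges to a single Poisson process on $[0,T]$, and the increments over $[\delta_n,u_i]$ are read off from it consistently; the negative-$u$ values use the independent piece of the process to the left of $\theta$, yielding the independent $Y^-$.

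The main obstacle is the boundary remainder $R_n(u)$ and the corresponding small correction in the compensator: one must show that the $O(\delta_n)$-length transition regions contribute negligibly \emph{to the random term}, not just in expectation. The intensity there is of order $n$ and the length is $\delta_n$, so the expected number of points in each boundary region is $O(n\delta_n)\to0$; therefore with probability tending to one $X^{(n)}$ has \emph{no} points in either boundary region, and on that event $R_n(u)=0$ exactly. This is the place where the hypothesis $n\delta_n\to0$ is essential and where the fast case genuinely diverges from the slow case. The remaining steps are bookkeeping: combining the stochastic and deterministic limits via Slutsky, checking that on the complement (a vanishing-probability event) nothing bad happens because $\ln(\lambda_{\theta+u/n}/\lambda_\theta)$ is bounded, and assembling the $k$-dimensional statement by applying the above to each coordinate together with the fact that $\bigl(X^{(n)}(\theta+x/n)-X^{(n)}(\theta)\bigr)_{x\ge0}$ and $\bigl(X^{(n)}(\theta)-X^{(n)}(\theta-x/n)\bigr)_{x\ge0}$ converge jointly to independent Poisson processes $Y^+$ (intensity $b$) and $Y^-$ (intensity $a$). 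One also notes that the sign analysis ($r<0$, or $u<0$) merely swaps the roles of $a$ and $b$ and of the left/right increments, reproducing both cases of~\eqref{CP_LLR}; and the passage to general continuous $\psi$ only perturbs $a,b$ by the values $\psi(\theta),\psi(\theta)+r$ and adds $o(1)$ terms, since all the action is confined to an $O(1/n)$-neighbourhood of $\theta$ where $\psi$ is essentially constant.
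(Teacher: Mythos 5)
Your argument is correct, but it follows a genuinely different route from the paper. The paper never works pathwise: it computes the joint characteristic function of $\bigl(\ln Z_n(u),\ln Z_n(v)\bigr)$ via the exponential formula $\Ex_\theta\exp\{i\int f\dd X\}=\exp\{\int(e^{if}-1)\Lambda_\theta\dd t\}$, splits the resulting integral into five pieces (two ``bulk'' intervals contributing the main terms and three transition windows each contributing $O(\delta_n)$, hence $O(n\delta_n)\to 0$ after multiplication by $n$), and matches the limit with the explicitly computed characteristic function of $\bigl(\ln Z^\star_{a,b}(u),\ln Z^\star_{a,b}(v)\bigr)$. You instead decompose $\ln Z_n(u)$ into compensator plus Poisson integral, observe that the integrand is the constant $\ln(a/b)$ on the bulk interval, and kill the boundary contribution by noting that the expected number of points of $X^{(n)}$ in each $O(\delta_n)$-window is $O(n\delta_n)\to 0$, so that with probability tending to one the remainder is exactly zero; the limit then drops out of the elementary convergence of independent Poisson counts. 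Both arguments use the hypothesis $n\delta_n\to 0$ in the same place (the transition windows), and both are complete; the paper's computation is more mechanical and entirely self-contained, while yours is more probabilistically transparent about where $Y^\pm$ come from, at the price of the extra (easy, since $X^{(n)}$ is itself Poisson with independent increments) justification of the joint convergence of the rescaled counts. One cosmetic slip: you write $n\int_0^\tau(\lambda_{\theta+u/n}-\lambda_\theta)\dd t\to -ru=(b-a)u$, but $-ru=-(b-a)u$; since this term enters $\ln Z_n$ with a minus sign, your final identification of the drift as $(b-a)u$ is nevertheless correct (and in fact the area computation gives exactly $-ru/n$ with no $O(\delta_n)$ correction, since the triangular pieces merely translate).
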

\begin{lemma}
Suppose\/ $n\delta_n\to 0$. Then, there exists a constant\/ $C>0$, such that
\begin{equation}
\Ex_\theta \bigl\vert Z_n^{1/2} (u)- Z_n^{1/2} (v) \bigr\vert ^2 \leq C\bil| u - v \bir|
\label{C2.CR-eq}
\end{equation}
for all\/ $n\in\NN$, $u,v \in \UU_n$ and\/ $\theta \in \Theta$.
\label{C2.CR}
\end{lemma}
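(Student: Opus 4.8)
The plan is to proceed exactly as in the proof of Lemma~\ref{C2.CL}, using Lemma~1.5 of \citet{Kut98} to bound the Hellinger-type distance between the normalized likelihood ratios at $u$ and $v$ by an integral of the squared difference of the intensities, and then to estimate this integral explicitly. As before, it suffices to treat the case $\bil|u-v\bir|\leq 1$ (for $\bil|u-v\bir|\geq 1$ the left-hand side is bounded by $4\leq 4\bil|u-v\bir|$), and by symmetry we may assume $u<v$; we present the case $r>0$, the case $r<0$ being analogous. Since the intensities $\lambda_\theta$ are bounded below by $\lambda_0>0$ (or by $m>0$ in the general $\psi$ case), Lemma~1.5 of \citet{Kut98} gives

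\[
\Ex_\theta \bigl\vert Z_n^{1/2}(u) - Z_n^{1/2}(v) \bigr\vert^2
\leq
\frac{n}{4\lambda_0} \int_0^\tau \bigl( \lambda_{\theta+u\varphi_n}(t) - \lambda_{\theta+v\varphi_n}(t) \bigr)^2 \dd t .
\]

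Now the key difference from the slow case: here $\varphi_n = 1/n$ and $n\delta_n\to 0$, so $\delta_n = o(\varphi_n)$, i.e.\ the transition interval is much shorter than the shift $(v-u)\varphi_n$ between the two change-points (at least once $n$ is large enough that $\delta_n \leq (v-u)\varphi_n$; the opposite regime $(v-u)\varphi_n < \delta_n$, which can occur for small $v-u$, should be handled by the slow-case-type computation and will produce a term of order $n\varphi_n^2/\delta_n \cdot (v-u)^2 \le (v-u)$, hence is harmless). In the main regime the supports of the two ``ramps'' $\lambda_{\theta+u\varphi_n}$ and $\lambda_{\theta+v\varphi_n}$ are essentially disjoint, and the integrand is, up to the ramp pieces, equal to $r^2$ on an interval of length $(v-u)\varphi_n$. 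More precisely I would split $\int_0^\tau$ into the interval where $\lambda_{\theta+u\varphi_n}$ has already reached $\lambda_0+r$ while $\lambda_{\theta+v\varphi_n}$ is still at $\lambda_0$ (contributing $r^2\cdot((v-u)\varphi_n - \delta_n)$ or so), plus the two short ramp intervals of length $\delta_n$ each, on which the integrand is at most $r^2$, contributing $O(r^2\delta_n)$. Altogether

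\[
n\int_0^\tau \bigl( \lambda_{\theta+u\varphi_n}(t) - \lambda_{\theta+v\varphi_n}(t) \bigr)^2 \dd t
\leq
n\, r^2 \bigl( (v-u)\varphi_n + O(\delta_n) \bigr)
=
r^2 (v-u) + O(n\delta_n) ,
\]

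and since $n\delta_n\to 0$ is bounded, the right-hand side is $\leq C\bil|u-v\bir|$ (using $\bil|u-v\bir|\le 1$ to absorb the $O(n\delta_n)$ bound into a constant multiple of $\bil|u-v\bir|$ is not quite legitimate when $v-u$ is tiny — so in that sub-regime one instead falls back on the slow-case estimate giving a clean $(v-u)^2\le (v-u)$ bound; the two regimes together yield the claim uniformly in $n,u,v,\theta$). This gives \eqref{C2.CR-eq} with a constant $C$ depending only on $r,\lambda_0$ (and, in the general case, on $m$ and $\sup_n n\delta_n$).

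The main obstacle, as the parenthetical remarks indicate, is bookkeeping the geometry of the overlap between the two intensity profiles carefully enough to get a bound \emph{linear} in $\bil|u-v\bir|$ that is genuinely uniform in $n$ and in $\theta$ — in particular making sure the transitional regime where $(v-u)\varphi_n$ and $\delta_n$ are comparable does not spoil the estimate, and checking that the endpoint/boundary effects near $\theta+\delta_n$ and near $\tau$ are negligible. This is the analogue of the fact, in the pure change-point model, that the Hellinger distance grows like $\bil|u-v\bir|$ rather than $\bil|u-v\bir|^2$; the ramps of width $\delta_n\to 0$ (relative to $\varphi_n$) only perturb this by lower-order terms. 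Once the integral estimate above is in place, the conclusion is immediate. Finally, as in the slow case, the extension from $\psi\equiv\lambda_0$ to a general strictly positive continuous $\psi$ is routine, replacing $\lambda_0$ by its minimum $m>0$ in the lower bounds and using continuity of $\psi$ to control the ramp contributions, and so Lemma~\ref{C2.CR} follows.
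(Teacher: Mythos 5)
Your proposal is correct in outline and would yield the lemma, but it takes a genuinely different and more laborious route than the paper. You carry over the slow-case machinery of Lemma~\ref{C2.CL}: after Lemma~1.5 of \citet{Kut98} you bound $\bigl(\sqrt{a}-\sqrt{b}\,\bigr)^2$ by $(a-b)^2/(4\lambda_0)$ and then must estimate $n\int_0^\tau\bigl(\lambda_{\theta+u\varphi_n}-\lambda_{\theta+v\varphi_n}\bigr)^2\dd t$ by a two-regime case analysis comparing $(v-u)\varphi_n$ with $\delta_n$. Both regimes do close: when $(v-u)\varphi_n\geq\delta_n$ the two ramp pieces contribute at most what the flat plateau would, so the total is $\leq r^2(v-u)\varphi_n$ and your feared $O(n\delta_n)$ remainder is in fact $O(v-u)$ there (since $n\delta_n\leq n(v-u)\varphi_n=(v-u)$ in that regime); when $(v-u)\varphi_n<\delta_n$ the slow-case computation gives $\frac{n\varphi_n^2}{\delta_n}(v-u)^2=\frac{(v-u)^2}{n\delta_n}\leq(v-u)$ precisely because $(v-u)\leq n\delta_n$ there --- a justification you assert but do not supply. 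The paper avoids all of this with one observation: for $u<v$ and $r>0$ one has $\lambda_{\theta+u\varphi_n}(t)\geq\lambda_{\theta+v\varphi_n}(t)$ pointwise, so $\sqrt{\lambda_{\theta+u\varphi_n}}-\sqrt{\lambda_{\theta+v\varphi_n}}\leq\sqrt{\lambda_{\theta+u\varphi_n}-\lambda_{\theta+v\varphi_n}}$, which reduces the Hellinger bound to the $L^1$ distance $\int_0^\tau\bigl(\lambda_{\theta+u\varphi_n}-\lambda_{\theta+v\varphi_n}\bigr)\dd t=r(v-u)\varphi_n$ by a simple area (translation) argument. That yields \eqref{C2.CR-eq} with $C=r$ for all $n$ and all $u,v$, with no case split, no restriction to $\bil|u-v\bir|\leq 1$, and no dependence on the geometry of the overlap --- exactly the ``main obstacle'' your write-up identifies. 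Your approach buys nothing extra here (it does not even need the linear-vs-quadratic distinction, since the target is linear), so if you keep it you must at least make the two regime reductions $n\delta_n\leq(v-u)$ and $(v-u)\leq n\delta_n$ explicit; otherwise the monotonicity trick is the cleaner path.
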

\begin{lemma}
Suppose\/ $n\delta_n\to 0$. Then, there exists a constant\/ $\kappa>0$, such that for\/ $n$ sufficiently large, we have
\begin{equation}
\Ex_\theta Z_n^{1/2}(u) \leq \exp\bigl\{-\kappa \min \{\bil| u \bir|, u^2\} \bigr\}
\label{C3.CR-eq}
\end{equation}
for all\/ $u\in \UU_n$ and\/ $\theta \in \Theta$.
\label{C3.CR}
\end{lemma}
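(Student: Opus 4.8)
\textbf{Proof proposal for Lemma~\ref{C3.CR}.}

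The plan is to follow exactly the same route as in the proof of Lemma~\ref{C3.CL}, starting from the Hellinger-type bound of Lemma~1.5 of \citet{Kut98}:
\[
\Ex_\theta Z_n^{1/2}(u) \leq \exp\biggl\{-\frac{n}{8(\lambda_0+r)} \int_0^\tau g_n(t) \dd t\biggr\},
\]
where $g_n(t)=\bigl(\lambda_{\theta+u\varphi_n}(t)-\lambda_\theta(t)\bigr)^2$ and now $\varphi_n=1/n$. As before, I would treat the case $u>0$, $r>0$ only, and split the analysis according to whether $u\varphi_n\leq\delta_n$ or $u\varphi_n\geq\delta_n$. The key difference from the slow case is that now $\delta_n/\varphi_n = n\delta_n\to 0$, so it is the \emph{second} regime ($u\varphi_n\geq\delta_n$, i.e.\ $u\geq n\delta_n$) that dominates the picture for large $|u|$, and in this regime the two trapezoidal ``spikes'' of width $\delta_n$ are disjoint, giving $\int_0^\tau g_n(t)\dd t = \tfrac{r^2}{3}\delta_n + \bigl(r^2 u\varphi_n - r^2\delta_n\bigr) + \tfrac{r^2}{3}\delta_n = r^2 u\varphi_n - \tfrac{r^2}{3}\delta_n$ exactly as in the display for $J_1',J_2',J_3'$ already in the excerpt. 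Plugging this in gives $\Ex_\theta Z_n^{1/2}(u)\leq\exp\{-\tfrac{r^2}{8(\lambda_0+r)}(nu\varphi_n - \tfrac13 n\delta_n)\} = \exp\{-\tfrac{r^2}{8(\lambda_0+r)}(u - \tfrac13 n\delta_n)\}$, and since $n\delta_n\to 0$ the correction term is eventually bounded by $\tfrac13 u$ (for $u\geq 1$, say), yielding a bound of the form $\exp\{-\kappa u\}$ for a suitable $\kappa>0$ and $u$ large.

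For the complementary regime $0<u\leq n\delta_n$ (which, because $n\delta_n\to 0$, only ever contains small values of $u$), the computation is literally the one already displayed for $J_1,J_2,J_3$ in the slow-case proof: $\int_0^\tau g_n(t)\dd t = r^2\tfrac{\varphi_n^2}{\delta_n}u^2 - \tfrac{r^2}{3}\tfrac{\varphi_n^3}{\delta_n^2}u^3 \geq \tfrac{2r^2}{3}\tfrac{\varphi_n^2}{\delta_n}u^2$ (using $u\leq\delta_n/\varphi_n$), so $\Ex_\theta Z_n^{1/2}(u)\leq\exp\{-\tfrac{r^2}{12(\lambda_0+r)}\,n\tfrac{\varphi_n^2}{\delta_n}u^2\} = \exp\{-\tfrac{r^2}{12(\lambda_0+r)}\cdot\tfrac{1}{n\delta_n}\,u^2\}$. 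Since $1/(n\delta_n)\to+\infty$, this is more than enough to dominate $\exp\{-\kappa\min\{u,u^2\}\}$ for any fixed $\kappa$; in particular on the range $u\leq n\delta_n\leq 1$ we have $\min\{u,u^2\}=u^2$ and the bound is immediate for $n$ large. Combining the two regimes and taking $\kappa$ to be the smaller of the two constants produced (and absorbing the $u\leq 1$ versus $u\geq 1$ bookkeeping in the usual way, so that $\min\{u,u^2\}$ appears), one gets \eqref{C3.CR-eq} for all $u\in\UU_n$ and all $\theta\in\Theta$, for $n$ sufficiently large. The cases $u<0$ and/or $r<0$ are handled by the same trapezoidal bookkeeping with the roles of the levels interchanged, and the extension to a general strictly positive continuous $\psi$ proceeds by replacing $\lambda_0$ and $\lambda_0+r$ by the minimum $m$ and an appropriate maximum in the denominators, exactly as indicated for the slow case.

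The only genuinely delicate point — and the one I would treat carefully — is the \emph{uniformity} in the bound, i.e.\ making sure that the single constant $\kappa$ works simultaneously in the small-$u$ regime (where the effective quadratic coefficient $1/(n\delta_n)$ blows up, which is helpful) and in the large-$u$ regime (where one only has a \emph{linear} decay in $u$ with coefficient of order $1$, independent of $n$). This is the same asymmetry as in Lemma~\ref{C3.CL}, and it is handled the same way: the linear regime is what forces $\kappa$ to be a fixed constant rather than something growing, and one just has to check that the $-\tfrac13 n\delta_n$ (respectively $+\tfrac{1}{24}n\delta_n$) correction is eventually negligible because $n\delta_n\to 0$ — so, unlike the slow case where the correction was controlled by $n\delta_n\to\infty$ appearing favourably elsewhere, here it is controlled simply by $n\delta_n\to 0$ making it a lower-order perturbation. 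No estimate is harder than what already appears in the excerpt.
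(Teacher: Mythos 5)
Your proposal is correct and follows essentially the same route as the paper: the authors likewise reuse the two bounds \eqref{L3-eq1} and \eqref{L3-eq2} established in the proof of Lemma~\ref{C3.CL} (noting they remain valid in the fast case), and then conclude from $n\varphi_n^2/\delta_n=1/(n\delta_n)\geq 1$ in the regime $u\leq\delta_n/\varphi_n$ and from $n\varphi_n=1$ in the regime $u\geq\delta_n/\varphi_n$, obtaining the same constant $\kappa=\frac{r^2}{12(\lambda_0+r)}$.
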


Before proving Lemmas~\ref{fidi}--\ref{C3.CR}, note that since we are in the fast case, for any $u\in\RR$, we have~$u\varphi_n > \delta_n$ for  $n$ sufficiently large. We present in Figure~\ref{fig-int-fast-case} the functions $\lambda_\theta$ and $\lambda_{\theta_u}$, where $\theta_u = \theta + u\varphi_n$, with $u,r>0$ and $n \gg 1$.
\begin{figure}[!ht]
\centering
\includegraphics[scale=0.36]{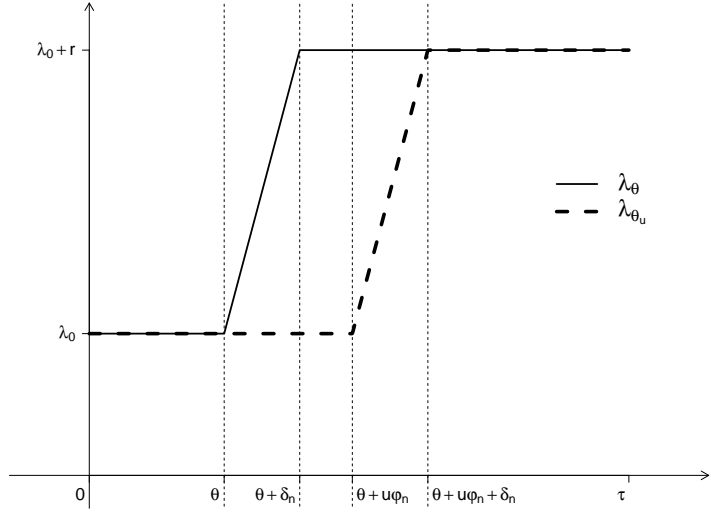}
\caption{$\lambda_\theta$ and $\lambda_{\theta_u}$ with $u,r>0$ and $n \gg 1$ (fast case)}
\label{fig-int-fast-case}
\end{figure}
\begin{proof}[Proof of Lemma~\ref{fidi}]
We study the convergence of 2-dimensional distributions only (the convergence of $d$-dimensional distributions for $d\geq 3$ can be treated in a similar way). For this, let us fix some $u,v \in \RR$ and consider the distribution of the vector $\bigl( \ln Z_n(u),\ln Z_n(v)\bigr)$, where $n$ is sufficiently large, so that $u,v \in \UU_n$. Its characteristic function is given, for all~$x, y \in \RR$, by
\begin{align*}
&\phi_{\bigl( \ln Z_n(u),\ln Z_n(v)\bigr)} \bigl( x,y \bigr)=
\Ex_\theta \exp \bigl\{ i x\ln Z_n(u)+ i y\ln Z_n(v) \bigr\} \\*
&\qquad=
\Ex_\theta \exp \Biggl\{
ix \sum_{j=1}^{n} \biggl[ \int_{0}^{\tau} \ln\biggl(\frac{\lambda_{\theta+u\varphi_n}(t)}{\lambda_{\theta}(t)} \biggr)\dd X_j(t) - \int_{0}^{\tau} \bigl(\lambda_{\theta+u\varphi_n}(t) - \lambda_{\theta}(t) \bigr) \dd t \bigr) \biggr] \\*
&\qquad \phantom{{}=\Ex_\theta \exp \Biggl\{}+
iy \sum_{j=1}^{n} \biggl[ \int_{0}^{\tau} \ln\biggl(\frac{\lambda_{\theta+v\varphi_n}(t)}{\lambda_{\theta}(t)} \biggr)\dd X_j(t) - \int_{0}^{\tau} \bigl(\lambda_{\theta+v\varphi_n}(t) - \lambda_{\theta}(t) \bigr) \dd t \bigr) \biggr]
\Biggr\}\\
&\qquad=
\Ex_\theta \exp \Biggl\{
ix \sum_{j=1}^{n} \biggl[ \int_{0}^{\tau} \ln\biggl(\frac{\lambda_{\theta+u\varphi_n}(t)}{\lambda_{\theta}(t)} \biggr)\dd X_j(t) + ru\varphi_n \biggr] \\*
&\qquad \phantom{{}=\Ex_\theta \exp \Biggl\{} +
iy \sum_{j=1}^{n} \biggl[ \int_{0}^{\tau} \ln\biggl(\frac{\lambda_{\theta+v\varphi_n}(t)}{\lambda_{\theta}(t)} \biggr)\dd X_j(t) +rv\varphi_n \biggr]
\Biggr\} \\
&\qquad=
\exp\bigl\{nir(ux+vy)\varphi_n\bigr\} \\*
&\qquad \phantom{{}=} \times
\Ex_\theta \exp \Biggl\{ ix \sum_{j=1}^{n} \int_{0}^{\tau} \ln\biggl(\frac{\lambda_{\theta+u\varphi_n}(t)}{\lambda_{\theta}(t)} \biggr)\dd X_j(t)
+
iy \sum_{j=1}^{n} \ \int_{0}^{\tau} \ln\biggl(\frac{\lambda_{\theta+v\varphi_n}(t)}{\lambda_{\theta}(t)} \biggr)\dd X_j(t)
\Biggr\} \\
&\qquad=
\exp\bigl\{ir(ux+vy)\bigr\} \\*
&\qquad \phantom{{}=} \times \exp \Biggl\{n\int_{0}^{\tau} \Biggl(
\exp\biggl\{ ix\ln \biggl( \frac{\lambda_{\theta+u\varphi_n}(t)}{\lambda_{\theta}(t)} \biggr)
+
iy \ln \biggl(\frac{\lambda_{\theta+v\varphi_n}(t)}{\lambda_{\theta}(t)}\biggr) \biggr\}
-1 \Biggr) \lambda_{\theta}(t) \dd t \Biggr \} \\*
&\qquad=
\exp\bigl\{ir(ux+vy)\bigr\} \exp\biggl\{n\int_{0}^{\tau} f_{n}(t) \dd t\biggr\}
\end{align*}
with an evident notation.

We consider the case where $v>u\geq0$ and $r>0$ only (the other cases can be treated in a similar way). In this case, for $n$ sufficiently large, we have $\delta_n<u\varphi_n$ and $u\varphi_n+\delta_n <v\varphi_n$, and so we can write
\begin{align*}
\int_0^\tau f_{n}(t) \dd t
&=
\int_{\theta}^{\theta +\delta_n} f_{n}(t) \dd t
+
\int_{\theta+\delta_n}^{\theta+u\varphi_n} f_{n}(t) \dd t
+
\int_{\theta+u\varphi_n}^{\theta+u\varphi_n+\delta_n} f_{n}(t) \dd t \\*
&\phantom{{}=} +
\int_{\theta+u\varphi_n+\delta_n}^{\theta+v\varphi_n} f_{n}(t) \dd t +
\int_{\theta+v\varphi_n}^{\theta+v\varphi_n+\delta_n} f_{n}(t) \dd t \\*
&=
\sum_{j=1}^5 I_j
\end{align*}
with evident notations.

For $I_1$, using the change of variable
\[
s=\frac{t-\theta}{\delta_n} \, ,
\]
we have
\begin{align*}
I_1
&=
\int_{\theta}^{\theta +\delta_n} f_{n}(t) \dd t \\*
&=
\int_{\theta}^{\theta +\delta_n}
\Biggl(
\exp \biggl\{ ix\ln \biggl( \frac{\lambda_{\theta+u\varphi_n}(t)}{\lambda_{\theta}(t)} \biggr)
+
iy \ln \biggl(\frac{\lambda_{\theta+v\varphi_n}(t)}{\lambda_{\theta}(t)}\biggr) \biggr\}
-1 \Biggr) \lambda_{\theta}(t) \dd t \\
&=
\int_{\theta}^{\theta+\delta_n}
\Biggl( \exp \biggl\{ i(x+y) \ln \biggl(\frac{\lambda_0}{\lambda_0+\frac{r}{\delta_n} \, (t-\theta)}\biggr) \biggr\}-1 \Biggr) \biggl( \lambda_0+\frac{r}{\delta_n} \, (t-\theta) \biggr) \dd t\\*
&=
\delta_n \int_0^1 \biggl( \exp \Bigl\{i(x+y) \ln \Bigl( \frac{\lambda_0}{\lambda_0+rs} \Bigr)\Bigr\} -1 \biggr) (\lambda_0+rs) \dd s = c_1 \delta_n ,
\end{align*}
where $c_1$ is some constant, and proceeding similarly, we obtain $I_3=c_2 \delta_n$ and $I_5=c_3 \delta_n$.

For $I_2$, we have
\begin{align*}
I_2
&=
\int_{\theta+\delta_n}^{\theta+u\varphi_n} f_{n}(t) \dd t \\*
&=
\int_{\theta+\delta_n}^{\theta+u\varphi_n}
\biggl( \exp \Bigl\{ i(x+y)\ln \Bigl(\frac{\lambda_0}{\lambda_0+r}\Bigr) \Bigr\}-1 \biggr) (\lambda_0+r) \dd t \\*
&=
\biggl( \exp \Bigl\{ i(x+y)\ln \Bigl(\frac{\lambda_0}{\lambda_0+r}\Bigr) \Bigr\}-1 \biggr) (\lambda_0+r) (u\varphi_n - \delta_n ),
\end{align*}
and proceeding similarly, we obtain
\[
I_4=\biggl( \exp \Bigl\{ i y \ln \Bigl(\frac{\lambda_0}{\lambda_0+r}\Bigr) \Bigr\}-1 \biggr) (\lambda_0+r) \bigl((v-u)\varphi_n - \delta_n\bigr).
\].

Therefore, it comes
\begin{multline*}
\int_0^\tau f_{n}(t) \dd t=
C\delta_n
+
\biggl[ (v-u) \biggl( \exp \Bigl\{ iy \ln \Bigl( \frac{\lambda_0}{\lambda_0+r} \Bigr) \Bigr\} -1 \biggr) \\*
+u\biggl( \exp \Bigl\{ i(x+y)\ln \Bigl(\frac{\lambda_0}{\lambda_0+r}\Bigr) \Bigr\}-1 \biggr) \biggr]
(\lambda_0+r) \varphi_n,
\end{multline*}
and hence, recalling that $\varphi_n=1/n$, we get
\begin{align*}
\exp \biggl\{n\int_0^\tau f_{n}(t) \dd t \biggr\}
&=
\exp \{ C n\delta_n \}
\exp \biggl\{
u(\lambda_0+r)\biggl( \exp \Bigl\{ i(x+y)\ln \Bigl(\frac{\lambda_0}{\lambda_0+r}\Bigr) \Bigr\}-1 \biggr)
\biggr\} \\*
&\phantom{{}=}\times \exp \biggl\{ (v-u)(\lambda_0+r) \biggl( \exp \Bigl\{ iy \ln \Bigl( \frac{\lambda_0}{\lambda_0+r} \Bigr) \Bigr\} -1 \biggr)
\biggr\}.
\end{align*}
Since
\[
n\delta_n \xrightarrow[n\to +\infty]{} 0,
\]
we finally conclude that
\begin{align}
\phi_{\bigl( \ln Z_n(u),\ln Z_n(v)\bigr)}(x,y)
&\xrightarrow[n\to +\infty]{}
\exp \{ir(ux+vy) \} \notag \\*
&\phantom{{}\xrightarrow[n\to +\infty]{}}\times
\exp \biggl\{
u(\lambda_0+r) \biggl( \exp \Bigl\{ i(x+y)\ln \Bigl(\frac{\lambda_0}{\lambda_0+r}\Bigr) \Bigr\}-1 \biggr)
\biggr\} \notag \\*
&\phantom{{}\xrightarrow[n\to +\infty]{}}\times\exp \biggl\{ (v-u)(\lambda_0+r) \biggl( \exp \Bigl\{ iy \ln \Bigl( \frac{\lambda_0}{\lambda_0+r} \Bigr) \Bigr\} -1 \biggr)
\biggr\}.
\label{ChF_fi-di}
\end{align}

Now, let us calculate, still for the case $v>u\geq 0$, the characteristic function of the vector $\bigl( \ln Z^\star_{a,b}(u),\ln Z^\star_{a,b}(v)\bigr)$. For all $x$, $y\in \RR$, we have
\begin{align*}
\phi_{\bigl( \ln Z^\star_{a,b}(u),\ln Z^\star_{a,b}(v)\bigr)}(x,y)
&=
\Ex \exp \bigl\{ ix\ln Z^\star_{a,b}(u) + iy\ln Z^\star_{a,b}(v) \bigr\} \\*
&=
\Ex \exp \bigg\{ ix\biggl( \ln \Bigl( \frac{a}{b} \Bigr)Y^+(u) + (b-a)u \biggr) \\*
&\phantom{{}=\Ex \exp \bigg\{ }
+iy \biggl( \ln \Bigl( \frac{a}{b} \Bigr)Y^+(v) +(b-a)v \biggr) \biggr\} \\
&=
\exp\{i(b-a) (ux+vy) \} \\*
&\phantom{{}=} \times
\Ex \exp \Bigl\{ ix\ln \Bigl( \frac{a}{b} \Bigr)Y^+(u) +iy\ln \Bigl( \frac{a}{b} \Bigr)Y^+(v)\Bigr\}.
\end{align*}
Since $Y^+$ is a Poisson process and $v>u\geq 0$, we obtain
\begin{align*}
\Ex \exp & \Big\{ ix\ln \Bigl( \frac{a}{b} \Bigr)Y^+(u) +iy\ln \Bigl( \frac{a}{b} \Bigr)Y^+(v)\Bigr\} \\*
&=
\Ex \exp \Big\{ i(x+y)\ln \Bigl( \frac{a}{b} \Bigr)Y^+(u) \Bigr\} \, \,
\Ex \exp \Big\{ iy\ln \Bigl( \frac{a}{b} \Bigr)\bigl( Y^+(v) - Y^+(u) \bigr)\Bigr\} \\*
&=
\exp \biggl\{
ub\biggl( \exp \Bigl\{ i(x+y)\ln \Bigl(\frac{a}{b}\Bigr) \Bigr\}-1 \biggr)
\biggr\}\exp \biggl\{ (v-u)b \biggl( \exp \Bigl\{ iy \ln \Bigl( \frac{a}{b} \Bigr) \Bigr\} -1 \biggr)
\biggr\}.
\end{align*}
Therefore, we get
\begin{align*}
\phi_{\bigl( \ln Z^\star_{a,b}(u),\ln Z^\star_{a,b}(v)\bigr)}(x,y)
&=
\exp\{i(b-a) (ux+vy) \}
\exp \biggl\{
ub\biggl( \exp \Bigl\{ i(x+y)\ln \Bigl(\frac{a}{b}\Bigr) \Bigr\}-1 \biggr)
\biggr\} \\*
&\phantom{{}=}\times
\exp \biggl\{ (v-u)b \biggl( \exp \Bigl\{ iy \ln \Bigl( \frac{a}{b} \Bigr) \Bigr\} -1 \biggr)
\biggr\},
\end{align*}
which, taking $a=\psi(\theta)=\lambda_0$ and $b=\psi(\theta)+r=\lambda_0+r$, is the same as the right-hand side of~\eqref{ChF_fi-di}. This shows that $\bigl( \ln Z_n(u),\ln Z_n(v)\bigr)$ converge to $\bigl( \ln Z^\star_{\lambda_0,\lambda_0+r}(u),\ln Z^\star_{\lambda_0,\lambda_0+r}(v)\bigr)$, and hence the convergence of 2-dimensional distributions of $Z_n$ to those of $Z^\star_{\lambda_0,\lambda_0+r}$ is proved.
\end{proof}
\begin{proof}[Proof of Lemma~\ref{C2.CR}]
We consider the case $r>0$ only (the case $r<0$ can be treated in a similar way) and, without loss of generality, we can suppose that $u<v$. According to Lemma~1.5 of \citet{Kut98}, we have
\[
\Ex_\theta \bigl\vert Z_n^{1/2} (u)-Z_n^{1/2} (v) \bigr\vert ^2
\leq
n\int_0^\tau \Bigl( \sqrt{\lambda_{\theta+u\varphi_n}(t)} - \sqrt{\lambda_{\theta+v\varphi_n}(t)} \mskip 2mu \Bigr)^2 \dd t.
\]
Since
\[
\lambda_{\theta+u\varphi_n}(t) \geq \lambda_{\theta+v\varphi_n}(t)
\]
and
\[
\sqrt{\lambda_{\theta+u\varphi_n}(t)} - \sqrt{\lambda_{\theta+v\varphi_n}(t)}
\leq
\sqrt{\lambda_{\theta+u\varphi_n}(t)-\lambda_{\theta+v\varphi_n}(t)} \, ,
\]
we obtain
\[
\int_0^\tau \Bigl( \sqrt{\lambda_{\theta+u\varphi_n}(t)} - \sqrt{\lambda_{\theta+v\varphi_n}(t)} \mskip 2mu \Bigr)^2 \dd t
\leq
\int_0^\tau \bigl( \lambda_{\theta+u\varphi_n}(t)-\lambda_{\theta+v\varphi_n}(t)\bigr) \dd t.
\]
By a simple area calculation, we get
\[
\int_0^\tau \bigl( \lambda_{\theta+u\varphi_n}(t)-\lambda_{\theta+v\varphi_n}(t) \bigr) \dd t
=
r(v-u)\varphi_n,
\]
which, taking into account that $\varphi_n=1/n$, yields the inequality~\eqref{C2.CR-eq} with $C=r$.
\end{proof}
\begin{proof}[Proof of Lemma~\ref{C3.CR}]
We consider the case where $u>0$ and $r>0$ only (the other cases can be treated in a similar way).

As in the proof of Lemma~\ref{C3.CL}, we treat separately two cases: $u\leq\delta_n/\varphi_n$ and $u\geq\delta_n/\varphi_n$. In the first case, we have already shown (see~\eqref{L3-eq1}, the proof of which is valid also in the fast case) that
\[
\Ex_\theta Z_n^{1/2}(u) \leq \exp\biggl\{-\frac{r^2}{12(\lambda_0+r)} \, \frac{n\varphi_n^2}{\delta_n} \, \min\{u, u^2\} \biggr\}.
\]
Thus, taking into account that $\frac{n\varphi_n^2}{\delta_n} = \frac{1}{n\delta_n} \to +\infty$, for $n$ sufficiently large (namely, such that~$n\delta_n\leq 1$) we conclude that
\[
\Ex_\theta Z_n^{1/2}(u) \leq \exp\biggl\{-\frac{r^2}{12(\lambda_0+r)} \, \min\{u, u^2\} \biggr\}.
\]

In the second case ($u\varphi_n\geq \delta_n$), we have already shown (see~\eqref{L3-eq2}, the proof of which is valid also in the fast case) that
\[
\Ex_\theta Z_n^{1/2}(u) \leq \exp\biggl\{-\frac{r^2}{12(\lambda_0+r)} \, n\varphi_n \min\{u, u^2\} \biggr\}.
\]
Thus, recalling that $\varphi_n=1/n$, we conclude again that
\[
\Ex_\theta Z_n^{1/2}(u) \leq \exp\biggl\{-\frac{r^2}{12(\lambda_0+r)} \, \min\{u, u^2\} \biggr\}.
\]
So the inequality~\eqref{C3.CR-eq} is proved with $\kappa =\frac{r^2}{12(\lambda_0+r)}\,$.
\end{proof}

Note that we can easily adapt the proofs of Lemmas~\ref{fidi}--\ref{C3.CR} to the case where $\psi$ is any strictly positive continuous (not necessarily constant) function on $[0,\tau]$, and so Theorems~\ref{Borne.CR} and~\ref{BE.CR} are proved.

\section*{Acknowledgments}

This research was financially supported by RFBR and CNRS (project 20--51--15001). The authors equally acknowledge support from the Labex CEMPI (ANR--11--LABX--0007--01). Finally, the authors would like to thank the editors and the reviewers for their constructive comments, which helped to improve the manuscript.

\bibliographystyle{myspbasic}
\bibliography{bibSCP}

\end{document}